\newtheorem*{rep@theorem}{\rep@title}
\newcommand{\newreptheorem}[2]{%
\newenvironment{rep#1}[1]{%
 \def\rep@title{#2 \ref{##1}}%
 \begin{rep@theorem}}%
 {\end{rep@theorem}}}
\theoremstyle{plain}
\newtheorem{theorem}{Theorem}[section]
\newtheorem{lemma}[theorem]{Lemma}
\newtheorem{proposition}[theorem]{Proposition}
\theoremstyle{definition}
\newtheorem{definition}[theorem]{Definition}
\newtheorem{remark}[theorem]{Remark}
\def\blfootnote{\gdef\@thefnmark{}\@footnotetext}
\DeclareMathAlphabet{\mathbbmsl}{U}{bbm}{m}{sl}
\begin{document}

\title{\bf{\Large Bootstrap percolation on the Hamming graphs}\\[9mm]}

\author{
Meysam Miralaei$^{^{1}}$  \qquad  Ali  Mohammadian$^{^{2}}$  \qquad   Behruz  Tayfeh-Rezaie$^{^{2}}$\\[4mm]
School of Mathematics,\\
Institute for Research in Fundamental Sciences (IPM),\\
P.O. Box 19395-5746, Tehran, Iran\\[3mm]
\href{mailto:m.miralaei@ipm.ir}{m.miralaei@ipm.ir} \qquad
\href{mailto:ali\_m@ipm.ir}{ali\_m@ipm.ir}  \qquad
\href{mailto:tayfeh-r@ipm.ir}{tayfeh-r@ipm.ir}\vspace{9mm}}

\blfootnote{\hspace*{-6mm}$^{^1}$Partially  supported by a grant from IPM.\\
$^{^2}$Partially  supported by Iran  National  Science Foundation   under project number  99003814.}

\date{}

\maketitle

\begin{abstract}
\noindent  The $r$-edge bootstrap percolation on a graph     is an  activation process of the  edges. The process starts with   some   initially activated edges   and  then, in each round,    any inactive edge whose one of   endpoints is incident to       at least $r$ active edges  becomes activated.
A  set of  initially activated edges   leading   to the activation of all  edges  is said to be a  percolating set.
Denote the  minimum  size of a  percolating set in the  $r$-edge  bootstrap percolation process  on a graph $G$  by $m_e(G, r)$.
The importance of the  $r$-edge bootstrap percolation relies on the fact that  $m_e(G,  r)$
provides bounds on
$m(G,  r)$, that is,   the minimum size of a percolating set in the $r$-neighbor
bootstrap percolation process on $G$.
In this paper, we  explicitly determine
$m_e(K_n^d, r)$, where $K_n^d$  is   the Cartesian product of $d$ copies of the complete graph  on $n$ vertices  which is referred   as       Hamming graph.
Using this, we show that  $m(K_n^d, r)=(1+o(1))\frac{d^{r-1}}{r!}$  when $n, r$ are fixed and $d$  goes to infinity which  extends a   known  result  on hypercubes. \\[3mm]
\noindent {\bf Keywords:}    Bootstrap percolation,  Hamming graph,  Percolating set.  \\[1mm]
\noindent {\bf AMS 2020 Mathematics Subject Classification:}    05C35, 60K35.   \\[9mm]
\end{abstract}

\section{Introduction}

Bootstrap percolation processes on graphs     can be interpreted as  a family of   cellular automata,    a concept     introduced  in 1966   by von Neumann \cite{von}.
They   have  been extensively investigated    in several diverse fields such as  combinatorics,  probability theory, statistical physics  and social sciences.
The      $r$-neighbor bootstrap percolation      is the most studied    of  such  processes    which   was  firstly introduced  in 1979  by Chalupa, Leath  and Reich \cite{cha}.
This  process has  also been treated  in the literature under  other    names like  irreversible threshold, influence propagation   and dynamic monopoly.

Throughout   this  paper, all graphs    are assumed to be finite,    undirected,     without loops and  multiple edges.
For   a graph $G$,   the vertex set of   $G$  is denoted by $V(G)$ and  the edge set of $G$ is denoted by $E(G)$.
The    $r$-neighbor bootstrap percolation  can be defined formally as follows.
Given a nonnegative    integer $r$ and a graph $G$, the  {\sl $r$-neighbor bootstrap percolation process} on $G$  begins
with a subset $V_0$ of  initially activated vertices of $G$
and then,   at    step $i$ of the process,    the set $V_i$ of active  vertices    is
$$V_i=V_{i-1}\cup\left\{
v\in V(G) \, \left| \,
\begin{array}{ll}
    \text{The vertex $v$ is adjacent to} \\
    \text{at least $r$ vertices in $V_{i-1}$.}
   \end{array}
    \right.
\right\}$$
for each  $i\geqslant1$.
We say   $V_0$    is a {\sl percolating set} of $G$   if $V_t=V(G)$ for some $t\geqslant0$.
An  extremal problem here  is to determine the minimum size of a percolating set which  is denoted by $m(G,r)$.
The size of percolating sets has been studied for      various families of  graphs such as  hypercubes \cite{mor},   grids \cite{n=2},       trees \cite{rie} and random graphs \cite{fei}.

An  edge version    of   the $r$-neighbor bootstrap percolation can be defined  by considering a special case of the so-called
`graph bootstrap percolation'. The concept of  graph bootstrap
percolation     was firstly  introduced in 1968   by Bollob\'as      under a different name  \cite{bol}  and was later studied
in 2012  by  Balogh,     Bollob\'as     and   Morris   under the current name \cite{bal}.
Graph  bootstrap percolation  can be defined formally as follows.
Given two  graphs $G$ and $H$, the {\sl  $H$-bootstrap percolation process} on $G$  begins
with a subset $E_0$ of initially activated edges of $G$
and then,   at   step $i$ of the process,    the set $E_i$  of active  edges   is
$$E_i=E_{i-1}\cup\left\{
e\in E(G) \, \left| \,
\begin{array}{ll}
    \text{There exists  a subgraph $H_e$ of $G$ such} \\
    \text{that $H_e$ is isomorphic to $H$, $e\in E(H_e)$} \\
    \text{and $E(H_e)\setminus\{e\}\subseteq E_{i-1}$.}
   \end{array}
    \right.
\right\}$$
for each    $i\geqslant1$. The  set $E_0$  is called    a {\sl percolating set} of  $G$  if   $E_t=E(G)$ for some $t\geqslant0$.
The minimum size of a percolating  set in the $H$-bootstrap percolation  process  on $G$  is  equal to the so-called       {\sl weak saturation number} of $H$ in $G$ and is   denoted by $\mathrm{wsat}(G, H)$.
Denoting the star graph   on    $m$ edges by $S_m$,
we refer to the  $S_{r+1}$-bootstrap percolation as  the {\sl  $r$-edge bootstrap percolation} which can be considered   as an edge    analogue   of   the $r$-neighbor bootstrap percolation.
For simplicity, we write
$m_e(G, r)$ instead of $\mathrm{wsat}(G, S_{r+1})$.
The    $2$-edge bootstrap percolation  had been studied in   1984  by Lenormand and Zarcone  under a different name  \cite{len}.
By a result from  \cite{ham}, we have
\begin{equation}\label{m-me}\frac{m_e(G, r)}{r}\leqslant m(G, r)\leqslant m_e(G, r)+\Big|\big\{v\in V(G) \, \big| \, \deg(v)<r\big\}\Big|,\end{equation}
where $\deg(v)=|\{x\in V(G) \, | \, x \text{   is adjacent to } v\}|$.

Let us fix here some notation and terminology used in the rest of the paper.
For every two adjacent  vertices $u$ and $v$,   the edge joining    $u$ and $v$ is denoted  by $uv$ or $\{u, v\}$.
The {\sl Cartesian product} of two graphs $G$ and $H$, denoted by $G\square H$, is the graph with vertex set $V(G)\times V(H)$  in which two vertices $(g_1, h_1)$ and $(g_2, h_2)$ are adjacent if and only if either $g_1=g_2$ and $h_1h_2\in E(H)$  or  $g_1g_2\in E(G)$ and $h_1=h_2$.
Denote by  $G^d$     the  Cartesian product of $d$ vertex disjoint  copies of the    graph $G$.
For any   integer $n$, we let  $ \llbracket n \rrbracket =\{0, 1, \ldots, n-1\}$ if $n\geqslant 1$ and      $ \llbracket n \rrbracket =\varnothing$ otherwise.
We denote the    complete graph on $n$ vertices by  $K_n$ and we   always consider    $ \llbracket n \rrbracket =\{0, 1, \ldots, n-1\}$  as the vertex set of $K_n$.
The graph $K_n^d$       is called  a  {\sl  Hamming graph}  of dimension $d$.

Balister,    Bollob\'{a}s,    Lee   and   Narayanan   \cite{balister}   gave    the  lower bound  $(r/d)^d$   and the    upper bound  $r^d/(2d!)$ on $m(K_n^d, r)$ for  $n\geqslant2$.
This  along with \eqref{m-me}  motivates   to study    $m_e(K_n^d, r)$.
In the current  paper, we apply  the  polynomial technique  introduced by Hambardzumyan,   Hatami and  Qian  \cite{ham}    to  get an explicit formula  for  $m_e(K_n^d, r)$ for all values of  $n, r, d$.  Note that $m_e(G, r)=|E(G)|$ if  $r\geqslant \max\{\deg(v)   \, | \, v\in V(G)\}$. In particular, $m_e(K_n^d, r)=\frac{(n-1)d}{2}n^d$ if $r\geqslant (n-1)d$.

\begin{theorem}\label{sigmaT2}
Let  $n\geqslant 2$  and $d, r\geqslant 0$ be three integers with  $ 0\leqslant r\leqslant (n-1)d$. Then,
\begin{align*}
	m_e\left(K_n^d, r\right) = \sum_{i_1=0}^{r-1} \sum_{i_2=0}^{r-i_1-1} \cdots \sum_{i_{n-1}=0}^{r-i_1-\cdots-i_{n-2}-1}
(r-i_1-\cdots-i_{n-1})\binom{d}{i_1}\binom{i_1}{i_2}\cdots \binom{i_{n-2}}{i_{n-1}}.
\end{align*}
\end{theorem}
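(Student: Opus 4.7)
The first step is to rewrite the multi-indexed sum in a more transparent form. Setting $a_j = i_j - i_{j+1}$ with the conventions $i_0 := d$ and $i_n := 0$, the product $\binom{d}{i_1}\binom{i_1}{i_2}\cdots\binom{i_{n-2}}{i_{n-1}}$ telescopes into the multinomial coefficient $\binom{d}{a_0, a_1, \ldots, a_{n-1}}$, which counts the vertices $v \in \llbracket n \rrbracket^d$ whose value $j$ occurs exactly $a_j$ times. Since $i_1+\cdots+i_{n-1} = \sum_j j a_j = v_1+\cdots+v_d =: w(v)$, the theorem reduces to showing
\[
m_e(K_n^d, r) \;=\; \sum_{v \,:\, w(v) < r}(r - w(v)).
\]
This reformulation makes the combinatorial content transparent: each low-weight vertex contributes its deficiency $r - w(v)$ toward the percolation cost.

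For the upper bound, I would produce an explicit percolating set $E_0$ of this size. For each vertex $v$ with $w(v) < r$, put into $E_0$ any choice of $r - w(v)$ edges from $v$ to its higher-weight neighbors; such a choice exists because $v$ has $(n-1)d - w(v) \geq r - w(v)$ higher-weight neighbors, using the assumption $r \leq (n-1)d$. Since each edge $\{u, v'\}$ with $u_i < v'_i$ has a unique lower endpoint, distinct vertices select disjoint edges, and so $|E_0| = \sum_v (r - w(v))^+$. To verify that $E_0$ percolates, process vertices in order of non-decreasing weight. No edge of $K_n^d$ joins two vertices of the same weight, since $w$ changes by $v_i - u_i \neq 0$ along every edge; hence when a vertex $v$ of weight $s$ is examined, all $s$ of its lower-weight neighbors have already been activated, contributing $s$ active edges at $v$. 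Together with the $(r - s)^+$ initial up-edges at $v$, the count reaches at least $r$, so $v$ becomes activated and all its edges are thereby made active.

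For the lower bound, I would apply the polynomial technique of Hambardzumyan, Hatami and Qian \cite{ham}. The aim is to assign to each edge $e \in E(K_n^d)$ a vector $\omega_e$ in a vector space $W$ satisfying (i) for every vertex $x$, the family $\{\omega_e : e \ni x\}$ spans a subspace of $W$ of dimension at most $r$, and (ii) the whole family $\{\omega_e : e \in E(K_n^d)\}$ spans $W$. A short induction on the bootstrap process then forces the initial edges' vectors to span $W$ already, yielding $m_e(K_n^d, r) \geq \dim W$. A natural candidate is the polynomial space $W = \{P \in \mathbb{R}[x_1, \ldots, x_d, y] : \deg_{x_i} P \leq n-1,\ \deg P \leq r-1\}$, whose monomial basis $\{x_1^{v_1}\cdots x_d^{v_d}\, y^h : v \in \llbracket n \rrbracket^d,\ h \geq 0,\ w(v) + h \leq r-1\}$ has cardinality $\sum_v (r - w(v))^+$, matching the target.

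The delicate step is constructing the edge-to-polynomial assignment so that both (i) and (ii) hold with the tight dimension. Each $\omega_e$ should encode the lower endpoint of $e$ together with the changing coordinate and its two values, chosen so that the $(n-1)d$ edges incident to a common vertex $x$ reduce to linear combinations of only $r$ distinguished polynomials local to $x$. I expect the principal obstacle to be verifying (i) at vertices $x$ with $w(x) \geq r$, where no monomial of $W$ is naturally anchored at $x$ and the dependencies among incident edge polynomials must come from genuine polynomial reduction identities induced by the degree bounds (effectively reducing $x_i^n$ in the quotient view). Condition (ii) should then follow by identifying the leading monomial of each edge polynomial and extracting a triangular basis that exhausts the monomial basis of $W$ above.
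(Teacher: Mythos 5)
Your reformulation of the right-hand side as $\sum_{v:\, w(v)<r}(r-w(v))$, where $w(v)=v_1+\cdots+v_d$ for $v\in\llbracket n\rrbracket^d$, is correct, and your upper bound is complete and valid: selecting $r-w(v)$ up-edges at each vertex $v$ of weight less than $r$ and sweeping through the vertices in order of increasing weight does yield a percolating set of exactly the claimed size. This is in fact more direct than the paper's construction, which builds the percolating set recursively from $n$ copies of $K_n^{d-1}$ run at thresholds $r,r-1,\ldots,r-n+1$ together with a clique factor contributing $\binom{g}{2}n^{d-1}$ edges (Lemma \ref{umeg}), and the observation that the nested sum is just the total weight deficiency of the low-weight vertices is a genuinely clarifying one.

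The lower bound, however, is not proved; it is only announced. You name a candidate space $W$ of the right dimension and list the properties an edge-to-vector assignment would need, but you never construct the assignment, and you yourself flag the verification at vertices of weight at least $r$ as the ``principal obstacle'' and leave it unresolved. That is precisely where all the work lies: the paper devotes Lemma \ref{car} to a recursive construction of two families of polynomials (the $Q_u^{k,\varphi}$, built from a coloring of $K_n^{d-1}$ and the correction factors $T_i^k$, and the $R_u^{s,t,y}$, needed exactly in the regime where the extra term $\binom{g}{2}n^{d-1}$ appears) together with a linear-independence argument, and then solves the resulting recursion in $d$ via the sequence $a_n(s,t)$ and Lemma \ref{lem:frac}. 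In addition, your stated sufficient condition (i) --- that the edge vectors incident to each vertex span at most $r$ dimensions --- is too weak for the induction you invoke: when an edge $e$ at $x$ is infected by $r$ already-active edges at $x$, you need $\omega_e$ to lie in the span of those particular $r$ vectors, which requires the incident vectors to sit in general position inside an $r$-dimensional space (equivalently, every $r+1$ of them admit a dependence with all coefficients nonzero, as demanded in Theorem \ref{BBMR}), not merely to be linearly dependent. So the half of the theorem that bounds $m_e(K_n^d,r)$ from below is missing, and with it the proof.
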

\noindent Bidgoli,   Mohammadian  and    Tayfeh-Rezaie \cite{bid}  had proved that $$m_e(K_n^d, r) = {{d+r}\choose{d+1}}$$  if     $0 \leqslant  r\leqslant n-1$. In the current  paper, we particularly show that
\begin{align*}
m_e\left(K_n^d, r\right) = {{nd-r}\choose{d+1}}+\left(r-\frac{(n-1)d}{2}\right)n^d
\end{align*}
if $(n-1)(d-1)\leqslant r\leqslant (n-1)d$.

An    explicit formula  for  $m_e(K_2^d, r)$  was   already found by Morrison and   Noel    \cite{mor}
which gives the  asymptotic formula $m(K_2^d, r)=\frac{1 + o(1)}{r}{{d}\choose{r-1}}$ when $r$ is fixed and $d$ goes to infinity, settling  a conjecture raised by  Balogh  and   Bollob\'as    \cite{balbol}.
We generalize  the  asymptotic  result   by   establishing the following theorem as a consequence of  Theorem \ref{sigmaT2}.

\begin{theorem}\label{assxxx}
Let $n\geqslant2$ and $r$ be two fixed positive  integers  and let  $d$ be an   integer  tending  to infinity. Then,
\[
  m\left(K_n^d, r\right)=\big(1+o(1)\big)\frac{d^{r-1}}{r!}.
\]
\end{theorem}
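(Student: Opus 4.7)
The plan is to sandwich $m(K_n^d,r)$ between matching lower and upper bounds of $(1+o(1))\,d^{r-1}/r!$. The lower bound falls out of the tools already in place, while the upper bound is the main technical step and will be obtained by induction on $n$.

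For the lower bound, apply \eqref{m-me}. Since $K_n^d$ is $(n-1)d$-regular and $(n-1)d\geqslant r$ once $d$ is large, the low-degree correction vanishes and $m(K_n^d,r)\geqslant m_e(K_n^d,r)/r$. One then extracts the asymptotic of the sum in Theorem \ref{sigmaT2} as $d\to\infty$: the only factor depending on $d$ is $\binom{d}{i_1}$, and in the feasible range $i_1\leqslant r-1$ it is maximised by $i_1=r-1$, a choice that forces $i_2=\cdots=i_{n-1}=0$ and contributes exactly $\binom{d}{r-1}$. All other summands have $i_1\leqslant r-2$ and, since their number depends only on $n$ and $r$, contribute $O(d^{r-2})$ in total. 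Hence $m_e(K_n^d,r)=(1+o(1))\,d^{r-1}/(r-1)!$, and dividing by $r$ yields $m(K_n^d,r)\geqslant(1+o(1))\,d^{r-1}/r!$.

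For the upper bound, induct on $n$, the induction hypothesis being that $m(K_n^d,s)\leqslant(1+o(1))\,d^{s-1}/s!$ as $d\to\infty$ for every fixed positive integer $s$. The base case $n=2$ is the Morrison--Noel theorem for hypercubes. For the inductive step with $n\geqslant 3$, take a percolating set $P\subseteq\llbracket n-1\rrbracket^d$ for $K_{n-1}^d$ of the claimed size and embed it into $K_n^d$ via the induced copy of $K_{n-1}^d$ on $\llbracket n-1\rrbracket^d$. Then $P$ activates all of $\llbracket n-1\rrbracket^d$ in the $r$-bootstrap on $K_n^d$. Partition the remaining vertices into layers $L_k=\{v\in\llbracket n\rrbracket^d:|\{i:v_i=n-1\}|=k\}$ for $k\geqslant 1$, and observe that a vertex of $L_k$ has exactly $k(n-1)$ neighbours in $L_{k-1}$, obtained by changing any one of its $k$ ``top'' coordinates to any of the $n-1$ other values in $\llbracket n-1\rrbracket$. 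Hence, after $L_0,L_1,\ldots,L_{k-1}$ are active, percolation within $L_k$ reduces to an $(r-k(n-1))$-bootstrap on its induced subgraph, which decomposes into $\binom{d}{k}$ components each isomorphic to $K_{n-1}^{d-k}$. Applying the inductive hypothesis with $s=r-k(n-1)$ on every component, the auxiliary seeds needed at layer $L_k$ total $\binom{d}{k}\,m(K_{n-1}^{d-k},r-k(n-1))=O(d^{r-k(n-2)-1})$; summing over $k\geqslant 1$ is dominated by $k=1$, giving $O(d^{r-n+1})=o(d^{r-1})$. Thus the overall percolating set has size $(1+o(1))\,d^{r-1}/r!$, and the induction closes.

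The main obstacle is making the layered-propagation argument rigorous: one must check that the layers really activate in the order $L_0,L_1,L_2,\ldots$ under the claimed seed budget and carefully track $o$-terms so they accumulate to only $o(d^{r-1})$. With both bounds in hand, Theorem \ref{assxxx} follows immediately.
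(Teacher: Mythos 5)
Your lower bound is exactly the paper's: extract the dominant term $\binom{d}{r-1}$ from Theorem \ref{sigmaT2} (the summand with $i_1=r-1$, which forces $i_2=\cdots=i_{n-1}=0$, while the remaining $O_{n,r}(1)$ summands are $O(d^{r-2})$) and divide by $r$ via \eqref{m-me}; this part is fine. Your upper bound, however, takes a genuinely different route. The paper builds one explicit percolating set directly in $K_n^d$: the set $U$ of characteristic vectors of a R\"odl covering family (every $(r-1)$-subset of $\llbracket d\rrbracket$ lies in some $r$-set of the family) together with the set $W$ of all vertices of weight $r-2$, with $|U|=\frac{1+o(1)}{r}\binom{d}{r-1}$ and $|W|=(n-1)^{r-2}\binom{d}{r-2}=O(d^{r-2})$, and then verifies percolation by sweeping through weight classes. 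You instead induct on $n$, taking the Morrison--Noel hypercube theorem as the base case and lifting a percolating set of $K_{n-1}^d$ into the copy on $\llbracket n-1\rrbracket^d$, then cleaning up the layers $L_k$ using that each vertex of $L_k$ has exactly $k(n-1)$ neighbours in $L_{k-1}$ and that $L_k$ induces $\binom{d}{k}$ disjoint copies of $K_{n-1}^{d-k}$, each needing only an $(r-k(n-1))$-percolating set; the resulting correction $\sum_{k\geqslant1}O(d^{\,r-k(n-2)-1})=O(d^{\,r-n+1})$ is indeed $o(d^{r-1})$ for $n\geqslant3$, and monotonicity of the process justifies the layer-by-layer activation order, so the details you flag as the ``main obstacle'' do go through. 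Both arguments are sound, and both ultimately rest on R\"odl's covering theorem (yours indirectly, through Morrison--Noel). The paper's construction is self-contained and uniform in $n$, directly generalizing the hypercube construction; yours is more modular and makes transparent that the dependence on $n$ enters only in lower-order terms, at the cost of outsourcing the main construction to the $n=2$ case.
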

\noindent Another asymptotic result, which  was proved by  Bidgoli,    Mohammadian  and   Tayfeh-Rezaie    \cite{bid},    states that    $m(K_n^d, r)=\frac{1+o(1)}{(d+1)!}r^d$    when both $r, d$ go to infinity with $d=o(\sqrt{r})$ and $n\geqslant r+1$.
Furthermore,   recursive formulas  for  $m_e(P_n^d, r)$ and  $m_e(C_n^d, r)$ are found in  \cite{ham}, where $P_n$ is  the path graph on $n$ vertices and  $C_n$ is  the cycle  graph on $n$ vertices.

The rest  of the paper is organized as follows. In Section \ref{poly}, we recall the  polynomial technique     which is used      to get  a lower bound on  $m_e(G, r)$ and we present a new proof for it.       In Section \ref{m_eK_n^d}, we present  an  explicit  formula  for  $m_e(K_n^d, r)$ for all values of  $n, r, d$.
Using this, we present  an asymptotic formula for  $m(K_n^d, r)$   when $n, r$ are fixed and $d$ tends to infinity.

\section{The algebraic method}\label{poly}

In this section, we recall  a polynomial technique  which is  introduced by Hambardzumyan,   Hatami and  Qian  \cite{ham}  and  we will    use it      to get  a lower bound on  $m_e(K_n^d, r)$ in the next section. We    show  here  that the   polynomial technique can be regarded as a special case of   a general framework      due to Balogh, Bollob\'{a}s, Morris and Riordan \cite{linear}. A short  proof of the following interesting  lemma  is given   in \cite{Kronenberg}.

\begin{theorem}[{\cite{linear}}]\label{BBMR}
Let $ G,  F $ be two graphs and let
$ W $ be an arbitrary  vector space. Assume  that there is    a subset $ \{w_e \, | \, e \in E(G)\} $ of $W$ such that
for each  copy $ F' $ of $F$ in $G$ there are nonzero scalars $ \{\lambda_{e,F'} \, | \, e \in E(F')\} $ such that
$\sum_{e\in E(F')} \lambda_{e, F'}w_e=0$. Then,
$$\mathrm{wsat}(G, F) \geqslant \dim\Big(\mathrm{span}\big\{w_e \, \big| \, e \in E(G)\big\}\Big).$$
\end{theorem}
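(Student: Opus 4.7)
The plan is to show that for any percolating set $E_0 \subseteq E(G)$, the vectors $\{w_e \mid e \in E_0\}$ already span the full subspace $\mathrm{span}\{w_e \mid e \in E(G)\}$. Once this is established, the bound $|E_0| \geqslant \dim(\mathrm{span}\{w_e \mid e \in E(G)\})$ is immediate, and minimizing over percolating sets gives the result.

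To establish the spanning claim, I would let $E_0, E_1, E_2, \ldots$ denote the sequence of active edge sets in the $F$-bootstrap percolation process started at $E_0$, and proceed by induction on the step $i$ to show that $w_e \in \mathrm{span}\{w_f \mid f \in E_0\}$ for every $e \in E_i$. The base case $i=0$ is tautological. For the inductive step, pick $e \in E_i \setminus E_{i-1}$. By the definition of the process, there is a copy $F'$ of $F$ in $G$ with $e \in E(F')$ and $E(F') \setminus \{e\} \subseteq E_{i-1}$. The hypothesis of the theorem supplies nonzero scalars $\lambda_{f, F'}$ with $\sum_{f \in E(F')} \lambda_{f, F'} w_f = 0$. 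Since $\lambda_{e, F'} \neq 0$, I can solve for $w_e$:
\[
w_e = -\frac{1}{\lambda_{e, F'}} \sum_{f \in E(F') \setminus \{e\}} \lambda_{f, F'} w_f.
\]
Each $w_f$ on the right-hand side lies in $\mathrm{span}\{w_g \mid g \in E_0\}$ by the inductive hypothesis, so $w_e$ does too. Because $E_0$ percolates, $E_t = E(G)$ for some $t$, so the induction covers every edge of $G$, proving the spanning claim.

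There is no substantial obstacle here; the whole argument is a clean linear-algebraic propagation. The one point that must not be overlooked is that the hypothesis requires \emph{all} the scalars $\lambda_{f, F'}$ to be nonzero (in particular $\lambda_{e, F'} \neq 0$), which is precisely what permits expressing the newly activated $w_e$ as a linear combination of the previously activated $w_f$. Without that, the inductive step could fail.
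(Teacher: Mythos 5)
Your argument is correct and is precisely the standard short proof of this lemma (the one the paper points to in \cite{Kronenberg}); the paper itself states Theorem \ref{BBMR} as a cited result and gives no proof of its own. Your induction on the percolation steps, using the nonvanishing of $\lambda_{e,F'}$ to express the newly activated $w_e$ in terms of vectors indexed by already-active edges and thereby showing $\mathrm{span}\{w_e \mid e\in E_0\}=\mathrm{span}\{w_e \mid e\in E(G)\}$, is exactly the intended argument and is complete.
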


We note that   the following definition  is  slightly different  from the  original version.

\begin{definition}[{\cite{ham}}]\label{defhh}
Let $r$ be a nonnegative   integer and   let   $G$  be a graph equipped  with a  proper edge coloring   $c : E(G)\rightarrow\mathbbmsl{R}$. Let $W_c(G, r)$ be the vector space over $\mathbbmsl{R}$ consisting of all
functions $\varphi : E(G)\rightarrow\mathbbmsl{R}$ for   which there exist  polynomials $\{P_v(x)\}_{v\in V(G)}$ satisfying
\begin{itemize}
\item[{\rm (i)}] $\deg P_v(x)\leqslant r-1$ for any vertex  $v\in V(G)$;
\item[{\rm (ii)}]  $P_u(c(uv))=P_v(c(uv))=\varphi(uv)$ for each edge $uv\in E(G)$.
\end{itemize}
It is said that the polynomials  $\big\{P_v(x)\big\}_{v\in V(G)}$ {\sl recognize} $\varphi$. Notice that we adopt the convention that the degree of the zero polynomial is   $-1$.
\end{definition}

The next theorem  provides an interesting  linear algebraic lower bound on $m_e(G, r)$.
Some other nice  applications of vector spaces and polynomials    for bootstrap percolation processes on graphs  can be found in \cite{balister, linear, bid, Kronenberg, ham, mor}.   Indeed,    the literature is full of proofs via linear and multilinear  algebraic techniques for which no combinatorial proof is known.

\begin{theorem}[{\cite{ham}}]\label{hh}
Let  $r$ be a nonnegative   integer  and let $c : E(G) \rightarrow\mathbbmsl{R}$ be a proper edge coloring of a graph $G$. Then,  $m_e(G, r)\geqslant\dim W_c(G, r)$.
\end{theorem}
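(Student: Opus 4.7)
The plan is to derive Theorem \ref{hh} as a direct instance of the general framework given by Theorem \ref{BBMR}, taking $F = S_{r+1}$ (so that $\mathrm{wsat}(G, S_{r+1}) = m_e(G,r)$). The key idea is to take $W$ to be the algebraic dual $W_c(G,r)^{*}$ and to let $w_e \in W$ be the evaluation functional defined by $w_e(\varphi) = \varphi(e)$ for every $\varphi \in W_c(G,r)$. Once this is set up, the conclusion of Theorem \ref{BBMR} reads $m_e(G,r) \geqslant \dim \mathrm{span}\{w_e \, | \, e \in E(G)\}$, and since $\bigcap_{e \in E(G)} \ker w_e = \{0\}$ (a function on $E(G)$ that vanishes on every edge is the zero function), the span of the $w_e$'s is all of $W_c(G,r)^{*}$, whose dimension equals $\dim W_c(G,r)$. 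So the entire proof reduces to verifying the hypothesis of Theorem \ref{BBMR}.

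To verify that hypothesis, fix a copy $F'$ of $S_{r+1}$ in $G$, which is a vertex $u$ together with $r+1$ distinct edges $e_1, \ldots, e_{r+1}$ incident to $u$. I would exhibit an explicit nontrivial linear relation of the form $\sum_{i=1}^{r+1} \lambda_i w_{e_i} = 0$, with all $\lambda_i$ nonzero. The point is that, by definition, any $\varphi \in W_c(G,r)$ is recognized by some polynomials $\{P_v\}_{v \in V(G)}$, and in particular $\varphi(e_i) = P_u(c(e_i))$ for $i = 1, \ldots, r+1$. Since $c$ is a proper edge coloring, the values $c(e_1), \ldots, c(e_{r+1})$ are pairwise distinct, so the map sending a polynomial of degree at most $r-1$ to its evaluation vector at those $r+1$ points is an injection into $\mathbb{R}^{r+1}$ whose image is an $r$-dimensional subspace.

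The orthogonal complement of that subspace is therefore one-dimensional; by the standard Lagrange/Vandermonde computation it is spanned by the vector with coordinates $\lambda_i = \prod_{j \neq i} (c(e_i) - c(e_j))^{-1}$. These are the scalars I would use: they are manifestly nonzero since the $c(e_j)$'s are distinct, and by construction they satisfy $\sum_{i=1}^{r+1} \lambda_i P(c(e_i)) = 0$ for every polynomial $P$ of degree at most $r-1$, hence in particular $\sum_{i=1}^{r+1} \lambda_i \varphi(e_i) = 0$ for every $\varphi \in W_c(G,r)$, i.e.\ $\sum_{i=1}^{r+1} \lambda_i w_{e_i} = 0$ in $W_c(G,r)^{*}$.

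The main (and only) substantive step is the Vandermonde verification above, and its slightly delicate part is checking that \emph{all} of the $\lambda_i$ are nonzero — a requirement of Theorem \ref{BBMR} that would fail without the hypothesis that $c$ is a proper edge coloring. Once that is in hand, Theorem \ref{BBMR} immediately yields $m_e(G,r) = \mathrm{wsat}(G, S_{r+1}) \geqslant \dim \mathrm{span}\{w_e\} = \dim W_c(G,r)$, which is the desired inequality.
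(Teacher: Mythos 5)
Your proposal is correct and follows essentially the same route as the paper: both instantiate Theorem \ref{BBMR} with $F=S_{r+1}$, take $w_e$ to be the vector of edge-evaluations of a basis of $W_c(G,r)$ (your dual-space formulation is just the basis-free version of this), and obtain the required relation from the one-dimensional kernel of the $r\times(r+1)$ Vandermonde system at the distinct colors around the star's center. Your only additions are the explicit closed form $\lambda_i=\prod_{j\neq i}\bigl(c(e_i)-c(e_j)\bigr)^{-1}$ and the explicit verification that the $w_e$ span the whole space, both of which the paper leaves implicit.
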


\begin{proposition}
Theorem \ref{hh} can be proved by Theorem \ref{BBMR}.
\end{proposition}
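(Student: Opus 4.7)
The plan is to apply Theorem \ref{BBMR} with $F = S_{r+1}$, so that $\mathrm{wsat}(G, S_{r+1}) = m_e(G, r)$, and with ambient vector space $W := W_c(G, r)^*$, the algebraic dual of the recognition space. For each edge $e \in E(G)$, I would take $w_e \in W$ to be the evaluation functional defined by $w_e(\varphi) := \varphi(e)$ for $\varphi \in W_c(G, r)$.

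First, I would verify that $\dim\mathrm{span}\{w_e \, | \, e \in E(G)\} = \dim W_c(G, r)$. Since every $\varphi \in W_c(G, r)$ is in particular a function on $E(G)$, the only $\varphi$ killed by every evaluation $w_e$ is the zero function, so the functionals $\{w_e\}$ separate points of $W_c(G, r)$ and therefore span the full dual $W_c(G, r)^*$, whose dimension equals $\dim W_c(G, r)$.

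Next, for each copy $F'$ of $S_{r+1}$ in $G$ I would exhibit a dependence $\sum_{e \in E(F')} \lambda_{e, F'} w_e = 0$ in which every coefficient $\lambda_{e, F'}$ is nonzero. Let $F'$ have center $v$ and leaves $u_0, \ldots, u_r$, and put $x_i := c(vu_i)$. The hypothesis that $c$ is a proper edge coloring forces $x_0, \ldots, x_r$ to be pairwise distinct, so reading off the coefficient of $x^r$ in the Lagrange interpolation through these $r+1$ nodes yields
\[
\sum_{i=0}^{r} \lambda_i P(x_i) = 0, \qquad \lambda_i := \prod_{j \neq i} (x_i - x_j)^{-1},
\]
for every polynomial $P$ of degree at most $r-1$, and each $\lambda_i$ is manifestly nonzero. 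Applying this identity to the polynomial $P_v$ recognizing an arbitrary $\varphi \in W_c(G, r)$ and invoking clause (ii) of Definition \ref{defhh} gives $\sum_{i=0}^{r} \lambda_i \varphi(vu_i) = 0$, which is exactly the relation $\sum_{i=0}^{r} \lambda_i w_{vu_i} = 0$ in $W$.

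With both hypotheses of Theorem \ref{BBMR} in place, I would conclude $m_e(G, r) = \mathrm{wsat}(G, S_{r+1}) \geqslant \dim\mathrm{span}\{w_e\} = \dim W_c(G, r)$. The delicate point, and what I would flag as the main obstacle however mild, is meeting the requirement in Theorem \ref{BBMR} that \emph{all} coefficients $\lambda_{e, F'}$ be nonzero: this is precisely why a proper edge coloring is essential (to guarantee the nodes $x_i$ are pairwise distinct), and why the specific Lagrange element, rather than an arbitrary element of the one-dimensional kernel of the truncated Vandermonde system, is the correct choice.
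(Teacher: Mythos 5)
Your proof is correct and is essentially the paper's argument: the evaluation functionals $w_e \in W_c(G,r)^*$ are the same vectors, up to the isomorphism induced by a choice of basis, as the paper's coordinate vectors $\bigl(\varphi_1(e), \ldots, \varphi_k(e)\bigr)$, and the nonzero coefficients $\lambda_i$ come from the one-dimensional kernel of the same truncated Vandermonde system, which the paper asserts and you make explicit via Lagrange interpolation. The only difference is presentational; if anything, you spell out two details the paper leaves implicit (why the $w_e$ span a space of dimension $\dim W_c(G,r)$, and why the kernel vector has all entries nonzero).
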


\begin{proof}
Let $\{\varphi_1, \ldots, \varphi_k  \}$ be  a basis for $W_c(G,r)$.
We assign to every edge $e\in E(G)$ a vector $w_e=(\varphi_1(e), \ldots, \varphi_k(e))$ in $\mathbbmsl{R}^k$.
We claim that for every copy of $S_{r+1}$ in $G$ with the central vertex $v$ and the edge set $\{e_1, \ldots, e_{r+1}\}$ there are nonzero scalars $\lambda_{1}, \ldots,  \lambda_{{r+1}} $ such that
$\sum_{i=1}^{r+1} \lambda_{i}w_{e_i}=0$. We know that there are nonzero scalers $\lambda_{1}, \ldots,  \lambda_{{r+1}} $ such that
\[
\left[
\begin{array}{ccccc}
	1 &  1  &    \cdots & 1  \\
	c_1  & c_2  &  \cdots  &c_{r+1}  \\
	c_1^2  & c_2^2  &  \cdots  &c_{r+1}^2 \\
	\vdots & \vdots &   \vdots & \vdots  \\
	c_1^{r-1}  & c_2^{r-1}  &  \cdots  &c_{r+1}^{r-1} \\
\end{array}\right]
\left[
\begin{array}{c}
\lambda_{1} \\  \lambda_{2} \\    \vdots \\ \lambda_{{r+1}}
\end{array}\right] =0,
\]
where $ c_i =c(e_i)$.
Therefore, for any polynomial $P(x)$ of degree $r-1$, we have
\[
\left[
\begin{array}{ccccc}
P(c_1) &  P(c_2)  &    \cdots & P(c_{r+1})
\end{array}\right]
\left[
\begin{array}{c}
\lambda_{1} \\  \lambda_{2} \\    \vdots \\ \lambda_{{r+1}}
\end{array}\right] =0
\]
and so
\begin{align} \label{mtx}
\left[
\begin{array}{ccccc}
P_v^{(1)}(c_1) &  P_v^{(1)}(c_2)  &    \cdots & P_v^{(1)}(c_{r+1}) \\
\vdots & \vdots  &    \vdots & \vdots \\
P_v^{(r+1)}(c_1) &  P_v^{(r+1)}(c_2)  &    \cdots & P_v^{(r+1)}(c_{r+1})
\end{array}\right]
\left[
\begin{array}{c}
\lambda_{1} \\  \lambda_{2} \\    \vdots \\ \lambda_{{r+1}}
\end{array}\right] =0,
\end{align}
where  $P_v^{(i)}$ is  the polynomial corresponding to the vertex $v$ and the function $\varphi_i$ for $i=1, \ldots, r+1$.
Equality \eqref{mtx} is equivalent to $\sum_{i=1}^{r+1} \lambda_{i}w_{e_i}=0$, as claimed.
Now, using Theorem \ref{BBMR}, we have
\begin{equation*}m_e(G, r)= \mathrm{wsat}(G, S_{r+1}) \geqslant \dim\Big(\mathrm{span}\big\{w_e \, \big| \, e \in E(G)\big\}\Big)=k.\qedhere\end{equation*}
\end{proof}

\section{Percolating sets of Hamming graphs}\label{m_eK_n^d}

In this section, we first  present   an  explicit  formula  for  $m_e(K_n^d, r)$ for all positive integers  $n, r, d$ and then,
as a consequence,  we  give     an asymptotic formula for  $m(K_n^d, r)$   when $n, r$ are fixed and $d$ tends to infinity.
Throughout this section, for every  integers $n, r, d$, we let  $f=r-1-(n-1)(d-1)$ and
\[
g=\left\{\begin{array}{ll}
1  &   \quad   \mbox{ if } f\leqslant -2,\\
\vspace{-1mm}\\
f+2  &   \quad  \mbox{ if } -1\leqslant f\leqslant n-2,\\
\vspace{-1mm}\\
n  & \quad  \mbox{ if }  f\geqslant n-1.
\end{array}\right.
\]
Throughout this section,   $m_e(G, i)$ and $W_c(G, i)$ are respectively  interpreted as  $0$  and   $\{0\}$  for any  graph $G$ if  $i<0$.

\begin{lemma}\label{umeg}
	Let $n, r, d$ be   three  positive integers.
	Then,
	\begin{align}\label{sigma-me}
	m_e\left(K_n^d, r\right)\leqslant\sum_{i=0}^{n-1} m_e\left(K_n^{d-1}, r-i\right) + \binom{g}{2}n^{d-1}.
	\end{align}
\end{lemma}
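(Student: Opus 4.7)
The plan is to establish the bound by constructing an explicit percolating set of the required size. Partition $V(K_n^d)$ by the last coordinate: for each $j \in \llbracket n \rrbracket$ let $H_j$ denote the subgraph of $K_n^d$ induced on the vertices whose last coordinate equals $j$. Each $H_j$ is a copy of $K_n^{d-1}$, while the remaining edges of $K_n^d$ (the \emph{cross edges}) form $n^{d-1}$ vertex-disjoint copies of $K_n$, one for each choice of the first $d-1$ coordinates; call these the cross-$K_n$'s.

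I would take the initial active set $E_0$ to be the disjoint union of two parts. First, for each $i = 0, \ldots, n-1$, place inside $H_i$ a minimum percolating set for the $(r-i)$-edge bootstrap process on $K_n^{d-1}$, of size $m_e(K_n^{d-1}, r-i)$ (empty when $r - i < 0$). Second, in every one of the $n^{d-1}$ cross-$K_n$'s, pre-activate all $\binom{g}{2}$ edges of the clique on the positions $\{0, 1, \ldots, g-1\}$. The total number of initially active edges then matches the right-hand side of \eqref{sigma-me} exactly. To verify that $E_0$ is a percolating set, I would induct on $j$ and show that, after a sequence of rounds of the $r$-edge bootstrap, all internal edges of $H_0, \ldots, H_j$ together with all cross edges incident to $H_0 \cup \cdots \cup H_j$ become active. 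In the base case, the seeds placed on $H_0$ activate $H_0$ in the $r$-bootstrap process on $H_0 \cong K_n^{d-1}$. For the inductive step, once $H_0, \ldots, H_{j-1}$ and all their cross edges are active, each vertex $(x, j) \in H_j$ carries at least $\max(j, g-1) \geq j$ active cross edges, coming from neighbours in the active slices and, for $j \leq g-1$, also from the pre-seeded $K_g$. By the standard threshold-reduction principle, the $(r-j)$-bootstrap percolating set placed on $H_j$ then activates every internal edge of $H_j$ in the ambient $r$-bootstrap process; and when $r - j \leq 0$ this step is immediate, since $(x, j)$ already has at least $r$ active incident edges. Finally, once $H_j$ is fully active, $(x, j)$ has $(n-1)(d-1) + \max(j, g-1)$ active incident edges, which is at least $r$ in view of the three-case definition of $g$, so the remaining cross edges at $(x, j)$ activate and the induction continues.

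The main obstacle is the case analysis that justifies the choice of $g$: one needs to verify $(n-1)(d-1) + g - 1 \geq r$ in each of the three regimes $f \leq -2$, $-1 \leq f \leq n-2$, and $f \geq n-1$, and to handle the $j = 0$ case carefully, where the only active cross edges incident to $(x, 0)$ come from the pre-seeded $K_g$ and the key inequality must therefore hold with $g-1$ in place of $\max(j, g-1)$. Once this is in place, the rest of the argument is a routine induction combining the slice decomposition of $K_n^d$ with the threshold-lowering effect of the pre-seeded cross edges.
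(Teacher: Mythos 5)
Your proposal is correct and follows essentially the same route as the paper: the identical slice decomposition by the last coordinate, the same seed set (minimum $(r-i)$-percolating sets in each slice plus the $K_g$ cliques in the cross-$K_n$'s), and the same induction over slices using the key inequality $(n-1)(d-1)+g-1=\max\{(n-1)(d-1),r\}\geqslant r$. No substantive differences to report.
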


\begin{proof}
	If $ r\geqslant(n-1)d$, then $g=n$ and
	\[
	m_e\left(K_n^{d}, r\right)=\left|E\left(K_n^{d}\right)\right|=\sum_{i=0}^{n-1}\left|E\left(K_n^{d-1}\right)\right|+\binom{n}{2}n^{d-1}=\sum_{i=0}^{n-1} m_e\left(K_n^{d-1}, r-i\right) + \binom{g}{2}n^{d-1}.
	\]
	So, assume that $r< (n-1)d$.
	For  any $i\in \llbracket n  \rrbracket $, consider the subgraph $H_i$  of $ K_n^d$ induced on  $\{(v, i)\in V(K_n^d) \,     |  \,  v\in V(K_n^{d-1})\}$ which  is clearly  isomorphic to $K_n^{d-1}$.  Also, consider   a  percolating  set $E_i$  of the  minimum possible size in
	the  $(r-i)$-edge bootstrap percolation   process  on $H_i$  and activate its elements. Let
	$$E=\left\{\big\{(v,i), (v,j)\big\} \in E\left( K_n^d\right) \, \left| \, v\in V\left(K_n^{d-1}\right) \,  \mbox{ and }  \,  0\leqslant i<j\leqslant g-1  \right.\right\}$$ and active all edges in $E$.
	Thus, the number of all initially activated edges is
	\[
	\sum_{i=0}^{n-1}|E_i|+ |E|= \sum_{i=0}^{n-1} m_e\left(K_n^{d-1}, r-i\right) + \binom{g}{2}n^{d-1}.
\]

Now, for  any $i \in \llbracket n  \rrbracket $, we show that all the edges incident to the vertices of $H_i$ become activated  in the $r$-edge bootstrap percolation process consecutively.
	Assume that all the edges incident to the vertices in $\bigcup_{j=0}^{i-1}V(H_j)$ became activated  in the $r$-edge bootstrap percolation process.
	Consider a vertex $x=(v, i)\in V(H_i)$. The vertex $x$ is connected to $i$ vertices $(v,0), \ldots, (v,{i-1})$ by activated edges in $E$ and also $x$ is adjacent to $r-i$ vertices in $V(H_i)$ by activated edges in $E_i$. Therefore, $x$ is incident to $r$ activated edges and so all other edges incident to $x$ in $E(H_i)$ can be activated in
	the  $r$-edge bootstrap percolation   process. Since $x$ is incident to $g-i-1$ activated edges in $E$, the number of activated edges incident to $x$ is $ i+(n-1)(d-1)+g-i-1 $
	in $E(K_n^d)$. Since $g= \max\{1, f+2\} $ and $(n-1)(d-1)+g-1=\max\big\{(n-1)(d-1), r\big\}\geqslant r$,
	all edges incident to $x$ in $K_n^d$ can be activated in the  $r$-edge bootstrap percolation  process.
	This shows that $\bigcup_{i=0}^{n-1}E_i \cup E$ is a percolating set    of desired size  in the $r$-edge bootstrap percolation  process on $K_n^d$.
\end{proof}

\begin{lemma}\label{car}
Let   $n, r, d$ be  three   positive integers  and let  $c : E(K_n^{d-1})\rightarrow\mathbbmsl{R}$ be a proper edge coloring of $K_n^{d-1}$.  Then, there is a proper edge  coloring $\widehat{c} : E( K_n^d)\rightarrow\mathbbmsl{R}$ such that
\begin{align}\label{sigma-dim}
\dim W_{\widehat{c}}\left(K_n^d, r\right)\geqslant\sum_{i=0}^{n-1}\dim W_c\left(K_n^{d-1}, r-i\right)+\binom{g}{2}n^{d-1}.
\end{align}
\end{lemma}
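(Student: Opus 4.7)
The approach is to extend the given coloring $c$ to a coloring $\widehat c$ on $K_n^d$ that is well adapted to the Cartesian product structure, then exhibit two explicit families of polynomial assignments living in $W_{\widehat c}(K_n^d, r)$, and prove they span a subspace of the claimed dimension. Fix distinct scalars $\nu_0, \dots, \nu_{n-1}$, chosen generically so that the values $\mu_{jk} := \nu_j + \nu_k$ (for $j \neq k$) are pairwise distinct and disjoint from the image of $c$; set $\widehat c(\{(u,j),(v,j)\}) = c(uv)$ on a horizontal edge and $\widehat c(\{(v,j),(v,k)\}) = \mu_{jk}$ on a vertical edge. This is a proper edge coloring. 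For the first family (\emph{Type A}), for each $i \in \llbracket n \rrbracket$ and each element $\varphi$ of a fixed basis of $W_c(K_n^{d-1}, r-i)$, recognized by $\{P_v\}_v$, define
\[
\widetilde P_{(v,j)}(x) = P_v(x)\, Q_j^{(i)}(x), \qquad Q_j^{(i)}(x) := \prod_{l=0}^{i-1}(\nu_j - \nu_l)(x - \nu_j - \nu_l).
\]
Then $Q_j^{(i)} \equiv 0$ for $j < i$ (the factor with $l=j$ vanishes), and the explicit evaluation $Q_j^{(i)}(\mu_{jk}) = \prod_{l<i}(\nu_j-\nu_l)(\nu_k-\nu_l)$ is symmetric in $(j,k)$, so vertical consistency $\widetilde P_{(v,j)}(\mu_{jk}) = \widetilde P_{(v,k)}(\mu_{jk})$ holds automatically; horizontal consistency is obvious and the degree bound $\deg P_v + \deg Q_j^{(i)} \leq (r-i-1)+i = r-1$ is satisfied.

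For the second family (\emph{Type B}), for each $v \in V(K_n^{d-1})$ and each $\psi$ of a fixed basis of $W_\mu(K_n, g-1)$ recognized by $\{L_k\}_k$ of degree $\leq g-2$, define $\widetilde P_{(w,k)}(x) = \delta_{vw}\, R_v(x) L_k(x)$, where $R_v(x) := \prod_{w' \sim v}(x - c(vw'))$ has degree $(n-1)(d-1)$. Horizontal consistency is automatic because $R_v$ vanishes at every horizontal color incident to $v$, and vertical consistency reduces to the consistency of $\psi$; the degree bound $\deg \widetilde P_{(v,k)} \leq (n-1)(d-1)+g-2 \leq r-1$ follows by direct case analysis on the definition of $g$. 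Combining the Bidgoli--Mohammadian--Tayfeh-Rezaie formula $m_e(K_n, g-1) = \binom g 2$ (valid since $g \leq n$) with the matching algebraic lower bound for a suitable $\mu$ gives $\dim W_\mu(K_n, g-1) \geq \binom g 2$, so the Type B family contains $\binom g 2 n^{d-1}$ members. Altogether we have produced $\sum_{i=0}^{n-1}\dim W_c(K_n^{d-1}, r-i) + \binom g 2 n^{d-1}$ elements of $W_{\widehat c}(K_n^d, r)$.

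The heart of the proof is linear independence. Suppose $\sum_{i,\varphi}\lambda_{i,\varphi}\widetilde\varphi_i + \sum_{v,\psi}\xi_{v,\psi}\widetilde\psi_v = 0$ and put $\Phi_i := \sum_\varphi \lambda_{i,\varphi}\varphi$. Evaluating on a horizontal edge $\{(u,j),(w,j)\}$ with $uw \in E(K_n^{d-1})$ kills every Type B contribution (since $R_v(c(uw)) = 0$), leaving
\[
\sum_{i=0}^{n-1} \Phi_i(uw)\, Q_j^{(i)}(c(uw)) = 0 \quad \text{for every } j \in \llbracket n \rrbracket.
\]
The matrix $\bigl(Q_j^{(i)}(c(uw))\bigr)_{j,i}$ is lower-triangular, because $Q_j^{(i)} \equiv 0$ whenever $j<i$, and its diagonal entries $\prod_{l<j}(\nu_j-\nu_l)(c(uw)-\nu_j-\nu_l)$ are all nonzero by the genericity of the $\nu_l$'s and the disjointness of the horizontal and vertical color ranges. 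Hence $\Phi_i(uw) = 0$ for every $i$ and every horizontal edge, so $\lambda_{i,\varphi} = 0$ throughout. Substituting back into the vertical-edge equations now yields $R_v(\mu_{jk}) L_j^{(v)}(\mu_{jk}) = 0$; since $R_v(\mu_{jk}) \neq 0$ and $L_j^{(v)}$ has degree $\leq g-2 \leq n-2$ but vanishes at the $n-1$ distinct points $\{\mu_{jk}:k\neq j\}$, necessarily $L_j^{(v)} \equiv 0$, so $\xi_{v,\psi} = 0$ as well.

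The main obstacle I expect is the correct choice of $Q_j^{(i)}$. A layer-independent ansatz such as $Q_j^{(i)}(x) = x^i$ satisfies vertical consistency trivially but produces a linearly dependent system: in the case $(n,d,r) = (2,2,2)$ one can check directly that the resulting Type A function for $i = 1$ coincides with a combination of the Type A function for $i=0$ and the two Type B functions, yielding only $3$ instead of the required $4$ independent members. The ``cocycle'' polynomial above, tailored to the symmetric coloring $\mu_{jk} = \nu_j + \nu_k$, is precisely what makes the horizontal-evaluation matrix lower-triangular with nonzero diagonal; this decouples the contributions at different indices $i$ and, once that is done, the vertical-edge equations cleanly separate the Type B functions at distinct centers $v$.
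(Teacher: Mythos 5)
Your construction is, up to cosmetic changes, the same as the paper's: you color vertical edges by sums $\nu_j+\nu_k$ of generic scalars where the paper uses products $\gamma_i\gamma_j$, your Type~A polynomials $P_v(x)\prod_{l<i}(\nu_j-\nu_l)(x-\nu_j-\nu_l)$ are the exact additive analogue of the paper's $T^{k}_i$, and your elimination of the coefficients $\lambda_{i,\varphi}$ via the lower-triangular matrix $\bigl(Q^{(i)}_j(c(uw))\bigr)_{j,i}$ is a clean reformulation of the paper's minimal-index argument (indeed it sidesteps a sloppy step in the paper, which asserts that the Type~A sum vanishes on vertical edges because ``$T^k_i(\gamma_i\gamma_j)=0$''). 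The degree bounds and consistency checks are all correct. There is, however, one genuine gap, in the Type~B family. You need $\binom{g}{2}n^{d-1}$ such functions, and to get $\binom{g}{2}$ per fiber you invoke a basis of $W_\mu(K_n,g-1)$ of that size, justified by ``combining $m_e(K_n,g-1)=\binom{g}{2}$ with the matching algebraic lower bound for a suitable $\mu$.'' This does not establish anything: Theorem~\ref{hh} gives $\dim W_\mu(K_n,g-1)\leqslant m_e(K_n,g-1)=\binom{g}{2}$, an inequality in the wrong direction, and the ``matching lower bound'' $\dim W_\mu(K_n,g-1)\geqslant\binom{g}{2}$ is itself a nontrivial instance (essentially the $d=1$ case) of the very statement being proved; in the paper it only becomes available as a corollary of the theorem that is proved \emph{by} Lemma~\ref{car}, so quoting it here is circular. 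Moreover you would need it not for ``a suitable $\mu$'' but for the specific coloring $\mu_{jk}=\nu_j+\nu_k$ that your $\widehat c$ imposes on the vertical fibers.

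The paper closes exactly this hole by writing the layer polynomials down explicitly: for each pair $s<t$ in $\llbracket g\rrbracket$ it defines an interpolation-type polynomial $L^{s,t}_i$ of degree $g-2$ satisfying $L^{s,t}_i(\gamma_i\gamma_j)=1$ if $(i,j)=(s,t)$ and $0$ for every other pair in $\llbracket g\rrbracket$, which makes both the count $\binom{g}{2}$ and the independence of the Type~B functions immediate. Your proof becomes complete once you either reproduce such an explicit family for the coloring $\mu_{jk}=\nu_j+\nu_k$ (the additive analogue $\prod_{\ell\in\llbracket g\rrbracket\setminus\{s,t\}}(x-\nu_i-\nu_\ell)$, suitably normalized and extended to the layers $i\geqslant g$, works) or restructure the argument as an induction on $d$ whose base case supplies this explicit construction; as written, the size of the Type~B family is asserted rather than proved.
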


\begin{proof}
Consider arbitrary mutually distinct nonzero real numbers $\gamma_0, \gamma_1, \ldots, \gamma_{n-1}$  such that none of the numbers   $\gamma_i\gamma_j$ is   in the image of $c$. For every    two adjacent vertices $u=(a, i)$
and $v=(b, j)$ of  $K_{n}^{d-1}\square K_n$,  define
$$\widehat{c}(uv)=\left\{\begin{array}{ll}
c(ab)  &   \quad  \mbox{ if } i=j,\\
\vspace{-1mm}\\
\gamma_i\gamma_j   &  \quad  \mbox{ if } a=b.\end{array}\right.$$
It is straightforward  to check that  $\widehat{c} : E( K_n^d)\rightarrow\mathbbmsl{R}$ is a proper edge  coloring.
Fix  $k \in \llbracket  n \rrbracket $,  a basis  $\mathscr{B}_k $ for  $W_c(K_n^{d-1}, r-k )$,   and  a function   $\varphi\in\mathscr{B}_{k }$.  According to Definition \ref{defhh}, there exist polynomials $\{P^{\varphi}_a(x)\}_{a\in V(K_n^{d-1})}$
recognizing  $\varphi$. Define    polynomial $Q^{k ,\varphi}_u$ for any vertex $u=(a,i)\in V(K_n^{d-1}\square K_n)$
as   $Q^{k ,\varphi}_u(x)=P^{\varphi}_a(x)T^{k }_i(x),$ where
$$T^{k }_i(x)=\prod_{\ell=0}^{k -1}(\gamma_i-\gamma_\ell)\left(\frac{x}{\gamma_i}-\gamma_\ell\right).$$
Since the degree of $P^{\varphi}_a(x)$ is at most $r-k-1$  and the degree of $ T^{k }_i(x) $ is $k$, we conclude that the degree of $Q^{k ,\varphi}_u$ is at most $r-1$.
Note that $T^{k }_i(\gamma_i\gamma_j)=T^{k }_j(\gamma_i\gamma_j)$ for all   $i$ and $j$. Also,    we know from Definition \ref{defhh} that  $P^{\varphi}_a(c(ab))=P^{\varphi}_b(c(ab))$ for each edge $ab\in E(K_n^{d-1})$. Hence,     $Q^{k ,\varphi}_u$ and $Q^{k ,\varphi}_v$ have the same value on $\widehat{c}(uv)$  for any  edge  $uv\in E(K_n^{d})$. This implies that  $\{Q^{k ,\varphi}_u\}_{u\in V(K_n^d)}$   recognize   a function $\mathnormal{\Phi}_{k ,\varphi}\in W_{\widehat{c}}(K_n^d, r)$.
Now, assume that $f\geqslant 0$. Define polynomial $ R^{ s,t, y}_u $  for every integers $ s,t \in \llbracket g \rrbracket $ and  vertices  $y\in V(K_n^{d-1})$,    $u=(a, i)\in V(K_n^{d-1}\square K_n)$ as    $  R^{ s,t, y}_u(x)=S^y_a(x)L_{i}^{s,t}(x) $, where
$$S^y_a(x)=\left\{\begin{array}{ll}
0  &  \quad \mbox{ if } y\neq a,\\
\vspace{-1mm}\\
{\displaystyle\prod_{yz\in E\left(K_n^{d-1}\right)}\big(x-c(yz)\big)} &  \quad \mbox{ if } y=a.\end{array}\right.$$
and
\[
L_i^{s,t}(x)=\left\{\begin{array}{ll}
0  & \quad  \mbox{ if } i\in \llbracket g \rrbracket \setminus \{s, t\},\\
\vspace{0mm}\\
{\displaystyle\prod_{ \ell \in \llbracket g \rrbracket \setminus  \{s,t\}}
	\frac{x-\gamma_i\gamma_{\ell}}{\gamma_s \gamma_t-\gamma_i\gamma_{\ell}}}  & \quad \mbox{ if } i\in \{s, t\},\\
\vspace{0mm}\\
{\displaystyle\prod_{ \ell \in \llbracket g \rrbracket \setminus  \{s,t\}}
	\frac{(\gamma_i-\gamma_{\ell})\left(\dfrac{x}{\gamma_i}-\gamma_{\ell}\right)}{(\gamma_s- \gamma_{\ell})(\gamma_t-\gamma_{\ell})}}  & \quad \mbox{ if }  i \in \llbracket n \rrbracket \setminus \llbracket g \rrbracket.
\end{array}\right.
\]
As  $f\geqslant 0$, we find that $r-1\geqslant (n-1)(d-1)$ and, since $g= \min \{n,   \max\{1, f+2\} \}$,  the degree of every polynomial $R_u^{s, t, y}$ is
\[
(n-1)(d-1)+g-2=\min\Big\{(n-1)d-1,   \max\big\{(n-1)(d-1)-1, r-1\big\}\Big\}\leqslant r-1.
\]
Let $u=(a, i)$ and $v=(b, j)$ be two arbitrary distinct vertices in $V(K_n^{d-1}\square K_n)$.  Note that $S^y_a(c(ab))=S^y_{b }(c(ab))=0$ for all   $ab \in E(K_n^{d-1})$. This means that $R_u^{s,t, y}(\widehat{c}(uv))=R_v^{s,t, y}(\widehat{c}(uv))$ if $i=j$. Also, it is easy to check that $L_i^{s,t}(\gamma_i\gamma_j)=L_j^{s,t}(\gamma_i\gamma_j)$ for all distinct $i,j \in \llbracket n \rrbracket $.  This means that $R_u^{s,t, y}(\widehat{c}(uv))=R_v^{s,t, y}(\widehat{c}(uv))$ if $a=b$.
Therefore,  $\{R^{s ,t, y}_u\}_{u\in V(K_n^d)}$   recognize   a function $\mathnormal{\Psi}_{s ,t, y}\in W_{\widehat{c}}(K_n^d, r)$.

Since  the number of functions  $\mathnormal{\Phi}_{k ,\varphi}$ is $\sum_{i=0}^{n-1}\dim W_c(G, r-i)$ and the number of functions $ \mathnormal{\Psi}_{s ,t, y} $ is $\binom{g}{2}n^{d-1}$,  it remains  to show that all functions $\mathnormal{\Phi}_{k,\varphi}$ and $ \mathnormal{\Psi}_{s ,t, y} $ are linearly independent.
Suppose  that
\begin{equation}\label{2sigma}
\sum_{k,\varphi}\lambda_{k,\varphi}\mathnormal{\Phi}_{k,\varphi}+\sum_{s, t, y}\mu_{s,t,y}\mathnormal{\Psi}_{s,t,y}=0
\end{equation}
for some scalars   $\lambda_{k,\varphi}, \mu_{s,t,y} \in\mathbbmsl{R}$. Let $u=(a, i)$ and $v=(b, j)$ be two adjacent vertices of $K_n^{d-1}\square K_n$. If $i=j$, then $\widehat{c}(uv)=c(ab)$ and since $S^y_a(c(ab))=0$ we find from \eqref{2sigma} that
\begin{equation}\label{sigma1}
 \sum_{k,\varphi}\lambda_{k,\varphi}\mathnormal{\Phi}_{k,\varphi}(uv)=\sum_{s, t, y}\mu_{s,t,y}\mathnormal{\Psi}_{s,t,y}(uv)=0.
\end{equation}
If $a=b$, then $\widehat{c}(uv)=\gamma_i\gamma_j$ and since $T^k_i(\gamma_i\gamma_j)=0$ we find from \eqref{2sigma} that
\begin{equation}\label{sigma2}
\sum_{k,\varphi}\lambda_{k,\varphi}\mathnormal{\Phi}_{k,\varphi}(uv)=\sum_{s, t, y}\mu_{s,t,y}\mathnormal{\Psi}_{s,t,y}(uv)=0.
\end{equation}
From \eqref{sigma1} and \eqref{sigma2} we find that
$$\sum_{k,\varphi}\lambda_{k,\varphi}\mathnormal{\Phi}_{k,\varphi}=\sum_{s, t, y}\mu_{s,t,y}\mathnormal{\Psi}_{s,t,y}=0.$$

First, we show that all scalars $\lambda_{k,\varphi}$ are equal to zero.
Towards  a contradiction, suppose  that $k_0$ is the smallest number from $ \llbracket n \rrbracket $  such that $\lambda_{k_0,\varphi}\neq0$ for some $\varphi$.
Note that, for every   $i<k$, the term $\gamma_i-\gamma_i$ appears in the expression of $T^k_i$  and so    $T^k_i=0$.
This yields  that   $Q^{k,\varphi}_{u}=0$ for any   vertex $u=(a, k_0)\in V(K_n^{d-1}\square K_n)$ and integer $k>k_0$.  Thus, for any  two adjacent vertices   $u=(a, k_0)$ and $v=(b, k_0)$ of $K_n^{d-1}\square K_n$, we have
$$0=\sum_{k,\varphi}\lambda_{k ,\varphi}\mathnormal{\Phi}_{k ,\varphi}(uv)=\sum_{k ,\varphi}\lambda_{k ,\varphi}Q^{k ,\varphi}_u\big(\widehat{c}(uv)\big)= \sum_{\varphi\in \mathscr{B}_{k_0}}\lambda_{k_0,\varphi}P^{\varphi}_a\big(c(ab)\big)T^{k_0}_{k_0}\big(c(ab)\big).$$
Our assumption on  $\gamma_0, \gamma_1, \ldots, \gamma_{n-1}$   implies that  $T^{k_0}_{k_0}(c(ab))\neq0$. Therefore,
$$\left(\sum_{\varphi\in \mathscr{B}_{k_0}}\lambda_{k_0,\varphi}\varphi\right)(ab)=\sum_{\varphi\in \mathscr{B}_{k_0}}\lambda_{k_0,\varphi}P^{\varphi}_a\big(c(ab)\big)=0$$ for each edge  $ab\in E(K_n^{d-1})$. This means that
\[
\sum_{\varphi\in \mathscr{B}_{k_0}}\lambda_{k_0,\varphi}\varphi=0
\]
which is a contradiction, since $\mathscr{B}_{k_0}$ is a basis for  $W_c(G, r-k_0)$.

Next, we show that all scalars $\mu_{s, t, y}$ are equal to zero. We have  $L_i^{s,t}(\gamma_i\gamma_j)=1$ if $(i, j)= (s, t)$ and  $L_i^{s,t}(\gamma_i\gamma_j)=0$ if $(i, j)\neq (s, t)$. To verify this, note that if $i=s$, then $j\neq t$ and so the term $(\gamma_j-\gamma_j)/(\gamma_t-\gamma_j)$ appears in the expression of $L_i^{s,t}(\gamma_i\gamma_j)$, implying that  $L_i^{s,t}(\gamma_i\gamma_j)=0$. Also, for every $i, j\in \llbracket g \rrbracket  $, we have $S^y_a(\gamma_i\gamma_j)\neq 0$ if and only if $ y=a $. Therefore, for every integers  $s_0, t_0 \in  \llbracket g \rrbracket $ and vertex  $y_0\in V(K_n^{d-1})$, by letting $u=(y_0, s_0)$ and $v=(y_0, t_0)$, we have
\[
0=\left(\sum_{s, t, y}\mu_{s,t,y}\mathnormal{\Psi}_{s,t,y}\right)(uv)= \mu_{s_0, t_0, y_0}S_{y_0}^{y_0}(\gamma_{s_0}\gamma_{t_0}).
\]
Since $ S_{y_0}^{y_0}(\gamma_{s_0}\gamma_{t_0})\neq 0 $, we conclude that $ \mu_{s_0, t_0, y_0}=0 $. Hence, we proved that all scalars $\lambda_{k ,\varphi}$ and $\mu_{s, t, y}$
are zero. This completes the proof.
\end{proof}

The following theorem particularity demonstrates   that the equalities hold in \eqref{sigma-me} and \eqref{sigma-dim}.

\begin{theorem}
Let  $n, d$ be  two   positive integers. Then,
there is   a proper edge coloring $ c_{n, d} : E(K_n^d) \rightarrow \mathbbmsl{R }$
such that $m_e(K_n^d, r)= \dim W_{c_{n, d}}(K_n^d, r)$ for any nonnegative integer $ r $. Moreover,
	\begin{align}\label{return-me}
m_e\left(K_n^d, r\right)=\sum_{i=0}^{n-1} m_e\left(K_n^{d-1}, r-i\right) + \binom{g}{2}n^{d-1}.
\end{align}
\end{theorem}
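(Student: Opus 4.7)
The plan is to prove both conclusions simultaneously by induction on $d$, with the equality of $m_e$ and $\dim W_{c_{n,d}}$ serving as the inductive invariant, and the recursion \eqref{return-me} falling out as a by-product of the resulting chain of inequalities.

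For the base case I would take $d=0$: the graph $K_n^0$ is a single vertex with no edges, so the empty coloring trivially satisfies $m_e(K_n^0, r)=0=\dim W_{c_{n,0}}(K_n^0, r)$ for every $r\geqslant 0$. (Alternatively, one could start at $d=1$ by applying the inductive step once to the empty coloring on $K_n^0$ and checking that the constructed $\widehat{c}$ on $K_n$ is a genuine proper edge coloring with colors $\gamma_i\gamma_j$.) The inductive step is where the three preceding results fit together.

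Assume the claim for $d-1$, so that there exists a proper edge coloring $c_{n,d-1}$ of $K_n^{d-1}$ with $m_e(K_n^{d-1}, s)=\dim W_{c_{n,d-1}}(K_n^{d-1}, s)$ for all $s\geqslant 0$. Feed $c_{n,d-1}$ into Lemma \ref{car} and let $c_{n,d}:=\widehat{c_{n,d-1}}$. Then for every fixed $r\geqslant 0$, chaining Theorem \ref{hh}, Lemma \ref{car}, the inductive hypothesis, and Lemma \ref{umeg} in this order yields
\begin{align*}
m_e\!\left(K_n^d, r\right)
&\geqslant \dim W_{c_{n,d}}\!\left(K_n^d, r\right) \\
&\geqslant \sum_{i=0}^{n-1}\dim W_{c_{n,d-1}}\!\left(K_n^{d-1}, r-i\right)+\binom{g}{2}n^{d-1} \\
&= \sum_{i=0}^{n-1} m_e\!\left(K_n^{d-1}, r-i\right)+\binom{g}{2}n^{d-1} \\
&\geqslant m_e\!\left(K_n^d, r\right).
\end{align*}
Since the first and last terms coincide, every intermediate inequality is an equality. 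The equality of the first two lines is the desired identity $m_e(K_n^d, r)=\dim W_{c_{n,d}}(K_n^d, r)$, and the equality of the last two lines is exactly the recursion \eqref{return-me}. Crucially, the coloring $c_{n,d}$ produced by Lemma \ref{car} depends only on $c_{n,d-1}$ and the auxiliary scalars $\gamma_0,\dots,\gamma_{n-1}$, not on $r$, so a single coloring works uniformly for all $r$.

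I do not expect a genuine obstacle: the bulk of the work is already done in Lemmas \ref{umeg} and \ref{car} and Theorem \ref{hh}, and this theorem's role is mainly to observe that the upper bound from the explicit percolating construction matches the lower bound from the polynomial method, forcing both sides of each inequality in the chain to agree. The only point worth checking carefully is bookkeeping for small or degenerate values of $r$ (in particular verifying that the conventions $m_e(G,i)=0$ and $W_c(G,i)=\{0\}$ for $i<0$ make the sums on both sides compatible and that the case distinctions defining $g$ align with those in the two preceding lemmas), but this is routine.
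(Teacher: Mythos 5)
Your proposal is correct and follows essentially the same route as the paper: induction on $d$ with base case $d=0$, feeding $c_{n,d-1}$ into Lemma \ref{car} to get $c_{n,d}$, and closing the chain of inequalities from Theorem \ref{hh}, Lemma \ref{car}, the inductive hypothesis, and Lemma \ref{umeg} so that all of them collapse to equalities. Your added observation that $c_{n,d}$ is independent of $r$ is exactly the point that makes the ``for any nonnegative integer $r$'' clause work, and matches the paper's intent.
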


\begin{proof}
We  prove     by induction on $d$ that $m_e(K_n^d, r)= \dim W_{c_{n, d}}(K_n^d, r)$ for a    proper edge coloring $ c_{n, d} : E(K_n^d) \rightarrow \mathbbmsl{R }$. Note that the latter equality trivially holds if  $d$ is replaced by $0$.
So, assume that there exists a proper edge coloring $ c_{n, d-1} : E(K_n^{d-1}) \rightarrow \mathbbmsl{R }$
such that $m_e(K_n^{d-1}, r)= \dim W_{c_{n, d-1}}(K_n^{d-1}, r)$ for all positive integers $n, r$.
By  Theorem \ref{hh},    Lemma  \ref{umeg} and Lemma   \ref{car}, there is a proper edge coloring  $c_{n, d} : E(K_n^d)\rightarrow\mathbbmsl{R}$ such that
\begin{align*}
m_e\left(K_n^d, r\right)&\geqslant \dim W_{c_{n, d}} \left(K_n^d, r\right)\\&\geqslant\sum_{i=0}^{n-1}\dim W_{c_{n, d-1}} \left(K_n^{d-1}, r-i\right)+\binom{g}{2}n^{d-1}\\&
=\sum_{i=0}^{n-1}m_e \left(K_n^{d-1}, r-i\right)+\binom{g}{2}n^{d-1}\\&
\geqslant m_e\left(K_n^d, r\right),
\end{align*}
meaning that  $m_e(K_n^d, r)= \dim W_{c_{n, d}}(K_n^d, r)$.
The `moreover'  statement is  straightforwardly valid from \eqref{sigma-me} and \eqref{sigma-dim}.
\end{proof}

Now, we are going to prove Theorem \ref{sigmaT2}. For this, we need some lemmas. At the beginning,  we establish    the following  lemma as mentioned in the introduction.

\begin{lemma}
Let  $n\geqslant 2$  and $d, r\geqslant 0$ be three integers with
$(n-1)(d-1)\leqslant r\leqslant (n-1)d$. Then,
	\begin{align}\label{extra}
m_e\left(K_n^d, r\right) = {{nd-r}\choose{d+1}}+\left(r-\frac{(n-1)d}{2}\right)n^d.
\end{align}
\end{lemma}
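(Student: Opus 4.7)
The plan is to induct on $d$, using the recursion \eqref{return-me}. To lighten notation, write $N=n-1$ and $s=r-N(d-1)$, so that the hypothesis $(n-1)(d-1)\le r\le (n-1)d$ becomes $0\le s\le N$; a direct inspection of the definitions of $f,g$ shows $f=s-1$, and hence $g=s+1$. The base case $d=1$ is straightforward: the known identity $m_e(K_n,r)=\binom{r+1}{2}$ for $0\le r\le n-1$ agrees with $\binom{n-r}{2}+(r-N/2)n$ by elementary expansion.

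For the inductive step, apply \eqref{return-me} and split the sum $\sum_{i=0}^{n-1} m_e(K_n^{d-1},r-i)$ into two regimes according to where $r-i$ sits relative to the saturation threshold $N(d-1)$. For $i\in\{0,\dots,s-1\}$, one has $r-i>N(d-1)$, so $m_e(K_n^{d-1},r-i)=|E(K_n^{d-1})|=\tfrac12 N(d-1)\,n^{d-1}$. For $i\in\{s,s+1,\dots,N\}$, set $j=i-s\in\{0,\dots,N-s\}$; then $r-i=N(d-1)-j$ lies in the interval $[N(d-2),N(d-1)]$, so the inductive hypothesis applies and gives
\[
m_e\bigl(K_n^{d-1},N(d-1)-j\bigr)=\binom{d-1+j}{d}+\left(\tfrac{N(d-1)}{2}-j\right)n^{d-1}.
\]

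Summing everything, the binomial contributions collapse by the hockey stick identity,
\[
\sum_{j=0}^{N-s}\binom{d-1+j}{d}=\binom{d+N-s}{d+1}=\binom{nd-r}{d+1},
\]
which is exactly the first term of \eqref{extra}. All remaining contributions are integer multiples of $n^{d-1}$; after grouping them with the $\binom{g}{2}n^{d-1}=\binom{s+1}{2}n^{d-1}$ supplied by \eqref{return-me}, the computation reduces to the single algebraic identity
\[
\binom{s+1}{2}-\binom{n-s}{2}=n\!\left(s-\tfrac{N}{2}\right).
\]
Together with the contribution $\tfrac{nN(d-1)}{2}n^{d-1}=\tfrac{N(d-1)}{2}n^d$ from the saturation block and the linear-in-$j$ pieces, this converts the coefficient of $n^d$ into $\tfrac{N(d-1)}{2}+s-\tfrac{N}{2}=r-\tfrac{Nd}{2}$, which matches the second term of \eqref{extra}.

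The main obstacle is the bookkeeping: one must carefully split the $i$-range into the two regimes, check that the boundary $j=0$ is handled consistently (the inductive formula indeed degenerates there to the saturation value $\tfrac12 N(d-1)n^{d-1}$, so no double counting occurs), and then combine three distinct quadratic-in-$s$ terms via the identity above. Beyond this consolidation, the argument is a routine consequence of the hockey stick identity and elementary expansion.
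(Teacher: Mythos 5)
Your proof is correct and follows essentially the same route as the paper's: induction on $d$ via the recursion \eqref{return-me}, splitting $\sum_{i=0}^{n-1} m_e(K_n^{d-1},r-i)$ into a saturated block and a block covered by the inductive hypothesis, collapsing the binomial contributions by the hockey stick identity, and consolidating the $n^{d-1}$-multiples by an elementary quadratic identity. The only differences are cosmetic (base case $d=1$ instead of $d=0$, and placing the threshold index $i=s$ in the inductive block rather than the saturated one, which you correctly verify causes no discrepancy).
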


\begin{proof}
We proceed with the proof by induction on $d$. If $d=0$, then $r=0$ and \eqref{extra} is obviously  valid.
Let  $d\geqslant 1$ and  assume that \eqref{extra} holds for $d-1$. Since $(n-1)(d-1)\leqslant r\leqslant (n-1)d$ and $f=r-1-(n-1)(d-1)$, we find that $-1\leqslant f\leqslant n-2$ and so $ g=f+2 $.
We derive from \eqref{return-me} that
\begin{align*}
m_e\left(K_n^d, r\right)&=\sum_{i=0}^{n-1}m_e\left(K_n^{d-1}, r-i\right)+\binom{f+2}{2}n^{d-1}\\
&=\sum_{i=0}^{f+1}m_e\left(K_n^{d-1}, r-i\right)+\sum_{i=f+2}^{n-1}m_e\left(K_n^{d-1}, r-i\right)+\binom{f+2}{2}n^{d-1}\\
&=\sum_{i=0}^{f+1}\frac{(n-1)(d-1)n^{d-1}}{2}\\
&+\sum_{i=f+2}^{n-1}\Bigg(\binom{n(d-1)-(r-i)}{d}+\left((r-i)-\frac{(n-1)(d-1)}{2}\right)n^{d-1}\Bigg)\\
&+\binom{f+2}{2}n^{d-1}\\
&=\sum_{i=d}^{nd-r-1}\binom{i}{d}+\left(r-\frac{(n-1)d}{2}\right)n^d\\
&={{nd-r}\choose{d+1}}+\left(r-\frac{(n-1)d}{2}\right)n^d,
\end{align*}
where   the last equality is obtained from the  well known combinatorial identity   $\sum_{i=0}^{k}\binom{m+i}{m}=\binom{m+k+1}{m+1}$.
This establishes \eqref{extra} and   completes  the proof.
\end{proof}

We need the following combinatorial identity   in the  proof of next lemma.

\begin{proposition}\label{unknown}
For every two    integers  $m \geqslant 0$ and $k\geqslant 1$,
\begin{align*}
\sum_{i_1=0}^{k} \cdots \sum_{i_{k-1}=0}^{k}  \sum_{i_{k}=0}^{k-i_1-\cdots-i_{k-1}}\binom{m}{i_k}\binom{i_k}{i_{k-1}}\cdots \binom{i_{2}}{i_{1}}=\binom{m+k}{m}.
\end{align*}
\end{proposition}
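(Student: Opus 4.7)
\medskip
\noindent\textbf{Proof plan.}
My plan is to give a bijective combinatorial proof by reinterpreting the left-hand side as the number of functions $\{1,\ldots,m\}\to\{0,1,\ldots,k\}$ whose values sum to at most $k$.

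The first step is to observe that the summand
\[
\binom{m}{i_k}\binom{i_k}{i_{k-1}}\cdots \binom{i_{2}}{i_{1}}
\]
is, by definition, the number of nested chains $A_1\subseteq A_2\subseteq\cdots\subseteq A_k\subseteq\{1,\ldots,m\}$ of subsets with $|A_j|=i_j$ for each $j$ (choose $A_k$, then a subset of it to serve as $A_{k-1}$, and so on). In particular, the product already vanishes unless $0\leq i_1\leq i_2\leq\cdots\leq i_k\leq m$, so the upper bounds $i_j\leq k$ appearing in the outer sums are inessential; the only binding constraint contributed by the indexing is the one from the innermost sum, namely $i_1+\cdots+i_k\leq k$. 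Hence the left-hand side counts nested chains of subsets of $\{1,\ldots,m\}$ whose total size $|A_1|+\cdots+|A_k|$ is at most $k$.

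Next, I would encode such a chain by the level function $\ell:\{1,\ldots,m\}\to\{1,\ldots,k+1\}$ with $\ell(x)=\min\{j:x\in A_j\}$, using the convention $\ell(x)=k+1$ if $x\notin A_k$. This encoding is a bijection onto $\{1,\ldots,k+1\}^m$, since the chain is recovered by $A_j=\{x:\ell(x)\leq j\}$. Swapping the order of summation,
\[
\sum_{j=1}^{k}|A_j|=\sum_{x=1}^{m}\#\bigl\{j\in\{1,\ldots,k\}:\ell(x)\leq j\bigr\}=\sum_{x=1}^{m}\max\bigl(0,\,k+1-\ell(x)\bigr).
\]
Substituting $\ell'(x)=k+1-\ell(x)$, which bijects $\{1,\ldots,k+1\}$ with $\{0,1,\ldots,k\}$, identifies chains with $\sum_j|A_j|\leq k$ with tuples $(\ell'_1,\ldots,\ell'_m)\in\{0,1,\ldots,k\}^m$ satisfying $\ell'_1+\cdots+\ell'_m\leq k$.

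Finally, since the individual bound $\ell'_x\leq k$ is automatic once the sum is at most $k$ and the entries are nonnegative, the count reduces to tuples in $\mathbb{Z}_{\geq 0}^{m}$ whose sum is at most $k$. Adding a slack variable $\ell'_{m+1}=k-\sum_{x=1}^{m}\ell'_x$ produces a bijection with $\mathbb{Z}_{\geq 0}^{m+1}$-tuples of sum exactly $k$, and stars-and-bars yields $\binom{m+k}{m}$, as desired. The main step requiring care is the first one: justifying that vanishing of the binomial factors outside the ``sorted'' range truly reduces the outer sum to the chain interpretation, so that no spurious terms need to be tracked. Everything after that reduction is a routine encoding.
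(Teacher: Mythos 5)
Your proof is correct, and it takes a genuinely different route from the paper's. The paper argues algebraically: starting from the Vandermonde convolution $\sum_{i}\binom{m}{i}\binom{k}{k-i}=\binom{m+k}{k}$ and repeatedly expanding the second factor, it reaches the stated $k$-fold sum carrying an extra factor $\binom{k-i_2-\cdots-i_k}{k-i_1-\cdots-i_k}$, and then verifies that this factor equals $1$ on every nonzero term (the support forces $i_1\leqslant\cdots\leqslant i_k$, hence $i_1\in\{0,1\}$, and both cases make the factor $1$). You instead read the summand directly as the number of chains $A_1\subseteq\cdots\subseteq A_k\subseteq\{1,\ldots,m\}$ with $|A_j|=i_j$, identify the effective summation region as the simplex $\{\,i_j\geqslant 0,\ i_1+\cdots+i_k\leqslant k\,\}$, encode a chain by its level function, and finish with stars and bars; each step is sound and the final count $\binom{m+k}{m}$ of weak compositions is right. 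Your argument is longer to set up but self-contained and explanatory --- it says what the two sides count --- while the paper's is shorter but hinges on the somewhat delicate observation about the residual binomial factor and on an index renaming. One small simplification available to you: the appeal to vanishing of the product outside the sorted range is not needed to pin down the summation region, since the innermost constraint $i_1+\cdots+i_k\leqslant k$ together with nonnegativity already forces every $i_j\leqslant k$, so the region as written is exactly the simplex; the vanishing is only needed (and suffices) to see that unsorted tuples contribute nothing to the chain count, which your interpretation handles automatically.
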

\begin{proof}
It is well known that $\sum_{i=0}^{k}\binom{m}{i}\binom{k}{k-i}=  \binom{m+k}{k} $. By using this equality repeatedly, we have
\begin{align*}
\binom{m+k}{k}&=\sum_{i_1=0}^{k}\binom{m}{i_1}\binom{k}{k-i_1}\\
&=\sum_{i_1=0}^{k}\binom{m}{i_1}\sum_{i_2=0}^{k-i_1} \binom{i_1}{i_2}\binom{k-i_1}{k-i_1-i_2}\\
&=\sum_{i_1=0}^{k}\sum_{i_2=0}^{k}\binom{m}{i_1} \binom{i_1}{i_2}\binom{k-i_1}{k-i_1-i_2}\\
& \, \, \,   \vdots\\
&=\sum_{i_1=0}^{k} \cdots \sum_{i_{k-1}=0}^{k}  \sum_{i_{k}=0}^{k-i_1-\cdots-i_{k-1}}\binom{m}{i_1}\binom{i_1}{i_{2}}\cdots \binom{i_{k-1}}{i_{k}}\binom{k-i_1-\cdots-i_{k-1}}{k-i_1-\cdots-i_{k}}\\
&=\sum_{i_k=0}^{k} \cdots \sum_{i_{2}=0}^{k}  \sum_{i_{1}=0}^{k-i_2-\cdots-i_{k}}\binom{m}{i_1}\binom{i_1}{i_{2}}\cdots \binom{i_{k-1}}{i_{k}}\binom{k-i_1-\cdots-i_{k-1}}{k-i_1-\cdots-i_{k}}.
\end{align*}
By renaming the indices, we get that
\begin{align}
\sum_{i_1=0}^{k} \cdots \sum_{i_{k-1}=0}^{k}  \sum_{i_{k}=0}^{k-i_1-\cdots-i_{k-1}}\binom{m}{i_k}\binom{i_k}{i_{k-1}}\cdots \binom{i_{2}}{i_{1}}\binom{k-i_2-\cdots-i_{k}}{k-i_1-\cdots-i_{k}}=\binom{m+k}{k}.\label{non--zero}
\end{align}
Note that the nonzero terms in the left hand side of  \eqref{non--zero}   will be obtained whenever   $ i_1\leqslant \cdots\leqslant i_k$.
Since  $ i_1\leqslant \cdots\leqslant i_k\leqslant k-i_1-\cdots-i_{k-1}\leqslant k-(k-1)i_1  $, we get  that $i_1\leqslant 1$. If $i_1=1$, then $i_1= \cdots= i_k=1$ and     $ \binom{k-i_2-\cdots-i_{k}}{k-i_1-\cdots-i_{k}}=1 $. If $i_1=0$, then    $ \binom{k-i_2-\cdots-i_{k}}{k-i_1-\cdots-i_{k}}=1 $ again. This completes the proof.
\end{proof}

\begin{lemma} \label{lem:frac}
Let  $n\geqslant 2$  and $d, r\geqslant 0$ be three integers with  $ (n-1)(d-1)+1\leqslant r\leqslant (n-1)d$. Then,
	\begin{align}\label{frac}
\sum_{i_1=0}^{d} \cdots \sum_{i_{n-1}=0}^{d}
(r-i_1-\cdots-i_{n-1})\binom{d}{i_1}\binom{i_1}{i_2}\cdots \binom{i_{n-2}}{i_{n-1}}
	=\left(r-\frac{(n-1)d}{2}\right)n^d
\end{align}
and
\begin{align}\label{binom}	
\sum_{i_1=0}^{d} \cdots \sum_{i_{n-2}=0}^{d} \sum_{i_{n-1}=r-i_1-\cdots-i_{n-2}}^{d}
(r-i_1-\cdots-i_{n-1})\binom{d}{i_1}\binom{i_1}{i_2}\cdots \binom{i_{n-2}}{i_{n-1}}=-\binom{nd-r}{d+1}.
\end{align}
\end{lemma}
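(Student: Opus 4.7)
My approach is to package both identities into a single generating function. Iterating the binomial theorem from the innermost index outward, one checks that
\[
\sum_{i_1,\ldots,i_{n-1}\geqslant 0}\binom{d}{i_1}\binom{i_1}{i_2}\cdots\binom{i_{n-2}}{i_{n-1}}\,z^{i_1+\cdots+i_{n-1}}\;=\;F(z),
\]
where $F(z):=(1+z+z^2+\cdots+z^{n-1})^d$. Equivalently, $F(z)$ enumerates functions $f:\{1,\ldots,d\}\to\{0,1,\ldots,n-1\}$ weighted by $z^{f(1)+\cdots+f(d)}$, with the chain $S_{n-1}\subseteq\cdots\subseteq S_1\subseteq\{1,\ldots,d\}$ of level sets $S_k=\{x:f(x)\geqslant k\}$ accounting for one index vector $(i_1,\ldots,i_{n-1})$. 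Note also that the upper limit $d$ on every inner sum in the statement of the lemma is immaterial, since the binomial factors vanish as soon as the indices fail to be monotone.

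For identity \eqref{frac} I would simply evaluate $F(1)=n^d$ and $F'(1)=d\cdot n^{d-1}\cdot(1+2+\cdots+(n-1))=\tfrac{(n-1)d}{2}n^d$. The left-hand side is $r\,F(1)-F'(1)$, which at once gives $\bigl(r-\tfrac{(n-1)d}{2}\bigr)n^d$.

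For identity \eqref{binom} I would first observe that the inner bound $i_{n-1}\geqslant r-i_1-\cdots-i_{n-2}$ is precisely $i_1+\cdots+i_{n-1}\geqslant r$, so the left-hand side equals $\sum_{j=r}^{(n-1)d}(r-j)\,[z^j]F(z)$. I would then exploit the palindromic symmetry $[z^j]F(z)=[z^{(n-1)d-j}]F(z)$, substitute $j':=(n-1)d-j$ and set $m:=(n-1)d-r$. The hypothesis $(n-1)(d-1)+1\leqslant r\leqslant (n-1)d$ forces $0\leqslant m\leqslant n-2$, and the sum transforms into
\[
-\sum_{j'=0}^{m}(m-j')\,[z^{j'}]F(z).
\]
Since $j'\leqslant m\leqslant n-2<n-1$, the upper bound $n-1$ on each coordinate of a weak composition of $j'$ into $d$ parts is automatically satisfied, so $[z^{j'}]F(z)=\binom{d+j'-1}{d-1}$. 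Two applications of the hockey-stick identity (first writing $m-j'=\sum_{k=j'+1}^{m}1$ and swapping sums, then summing the resulting inner column) collapse the expression to $\binom{d+m}{d+1}=\binom{nd-r}{d+1}$, delivering $-\binom{nd-r}{d+1}$.

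The only subtle point, and the pivot of the whole argument, is that the hypothesis on $r$ is exactly what puts $m=(n-1)d-r$ into the range $[0,n-2]$, which is what allows $[z^{j'}]F(z)$ to collapse to a single binomial coefficient. Outside this range, extracting $[z^{j'}]F(z)$ would require an inclusion-exclusion over which coordinates saturate the value $n-1$, and the clean closed form $\binom{nd-r}{d+1}$ would not appear.
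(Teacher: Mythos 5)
Your proposal is correct, and it takes a genuinely different route from the paper. The generating-function identity $\sum\binom{d}{i_1}\binom{i_1}{i_2}\cdots\binom{i_{n-2}}{i_{n-1}}z^{i_1+\cdots+i_{n-1}}=(1+z+\cdots+z^{n-1})^d$ is exactly right (the nested sums telescope via the binomial theorem), and each subsequent step checks out: the left side of \eqref{frac} is $rF(1)-F'(1)$; the lower limit on $i_{n-1}$ in \eqref{binom} encodes $i_1+\cdots+i_{n-1}\geqslant r$; the palindromy of $F$ together with the hypothesis on $r$ reduces the sum to exponents $j'\leqslant n-2$, where $[z^{j'}]F(z)=\binom{d+j'-1}{d-1}$; and the two hockey-stick applications do give $\sum_{j'=0}^{m}(m-j')\binom{d+j'-1}{d-1}=\binom{d+m}{d+1}=\binom{nd-r}{d+1}$. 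By comparison, the paper proves \eqref{frac} by the substitution $i_\ell\mapsto d-i_\ell$ and the reversal symmetry of the product of binomials, obtaining $A=\bigl(2r-(n-1)d\bigr)n^d-A$; this is essentially the same palindromic symmetry you exploit, phrased combinatorially rather than through $F'(1)$. For \eqref{binom} the paper's argument is substantially heavier: an induction on $n$ that peels off the outer index (observing that only $i_1=d$ contributes), with a separate treatment of the boundary case $r=(n-1)(d-1)+1$ that requires the auxiliary nested-binomial identity of Proposition \ref{unknown}. Your route dispenses with both the induction and that proposition, treats the two identities from a single object, and makes transparent why the hypothesis on $r$ is exactly what is needed (it places the reflected exponent below $n-1$, so the coefficient of $F$ collapses to a single binomial). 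The only caveat is presentational: what you have is a plan rather than a written-out proof, so the coefficient extraction and the two hockey-stick steps should be displayed explicitly, but every step is routine and correct.
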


\begin{proof}
For convenience, let $A$ and $B$ be the left hand sides of \eqref{frac} and \eqref{binom}, respectively.
By substituting  $d-i_{\ell}$ with $i_{\ell}$ for $ \ell =1, \ldots, n-1$, we   may write
\begin{align}
A&=\sum_{i_1=0}^{d} \cdots \sum_{i_{n-1}=0}^{d}
\big(r-(n-1)d+i_1+\cdots+i_{n-1}\big)\binom{d}{i_1}\binom{d-i_1}{d-i_2}\cdots \binom{d-i_{n-2}}{d-i_{n-1}}\nonumber\\
&= \sum_{i_1=0}^{d} \cdots \sum_{i_{n-1}=0}^{d}
\big(r-(n-1)d+i_1+\cdots+i_{n-1}\big) \binom{d}{i_{n-1}}\binom{i_{n-1}}{i_{n-2}}\cdots \binom{i_{2}}{i_{1}}\nonumber\\
&= \sum_{i_1=0}^{d} \cdots \sum_{i_{n-1}=0}^{d}
\big(r-(n-1)d+i_1+\cdots+i_{n-1}\big) \binom{d}{i_{1}}\binom{i_{1}}{i_{2}}\cdots \binom{i_{n-2}}{i_{n-1}}\nonumber\\
&= \big(2r-(n-1)d\big)\sum_{i_1=0}^{d} \cdots \sum_{i_{n-1}=0}^{d}
\binom{d}{i_{1}}\binom{i_{1}}{i_{2}}\cdots \binom{i_{n-2}}{i_{n-1}}\nonumber\\
&- \sum_{i_1=0}^{d} \cdots \sum_{i_{n-1}=0}^{d}
(r-i_1-\cdots-i_{n-1}) \binom{d}{i_{1}}\binom{i_{1}}{i_{2}}\cdots \binom{i_{n-2}}{i_{n-1}}\nonumber\\
&=\big(2r-(n-1)d\big)n^d-A,\label{xxxyyyzzz}
\end{align}
where  the last equality comes from
\begin{align*}
\sum_{i_1=0}^{d} \cdots \sum_{i_{n-1}=0}^{d}
\binom{d}{i_{1}}\binom{i_{1}}{i_{2}}\cdots \binom{i_{n-2}}{i_{n-1}}&=
\sum_{i_1=0}^{d}\sum_{i_2=0}^{i_1} \cdots \sum_{i_{n-1}=0}^{i_{n-2}}
\binom{d}{i_{1}}\binom{i_{1}}{i_{2}}\cdots \binom{i_{n-2}}{i_{n-1}}\\
&=\sum_{i_1=0}^{d}\sum_{i_2=0}^{i_1} \cdots \sum_{i_{n-2}=0}^{i_{n-3}}
\binom{d}{i_{1}}\binom{i_{1}}{i_{2}}\cdots \binom{i_{n-3}}{i_{n-2}}\sum_{i_{n-1}=0}^{i_{n-2}}\binom{i_{n-2}}{i_{n-1}}\\
&=\sum_{i_1=0}^{d}\sum_{i_2=0}^{i_1} \cdots \sum_{i_{n-3}=0}^{i_{n-4}}
\binom{d}{i_{1}}\binom{i_{1}}{i_{2}}\cdots\binom{i_{n-4}}{i_{n-3}}\sum_{i_{n-2}=0}^{i_{n-3}} \binom{i_{n-3}}{i_{n-2}}2^{i_{n-2}}\\
& \, \, \,   \vdots\\
&=\sum_{i_1=0}^{d}\binom{d}{i_1}(n-1)^{i_1}\\
&=n^d.
\end{align*}
As    \eqref{frac}   is deduced from  \eqref{xxxyyyzzz}, we are done.

We prove \eqref{binom} by induction on $n$. If $n=2$, then $r=d$ and obviously  \eqref{binom} holds. Let  $n\geqslant 3$ and assume that  \eqref{binom} is valid for $n-1$. If $i_1\leqslant d-1$, then the nonzero terms in the left hand side of  \eqref{binom}   will be obtained whenever all indices $i_1, \ldots, i_{n-1}$ are at most $d-1$. However, $i_{n-1}\geqslant r-i_1-\cdots-i_{n-2}\geqslant(n-1)(d-1)+1-(n-2)(d-1)= d$, a contradiction. Therefore,
\begin{align}\label{key}
B&=\sum_{i_2=0}^{d} \cdots \sum_{i_{n-2}=0}^{d}\sum_{i_{n-1}=r-d-i_2-\cdots-i_{n-2}}^{d}
(r-d-i_2-\cdots-i_{n-1})\binom{d}{i_2}\binom{i_2}{i_3}\cdots \binom{i_{n-2}}{i_{n-1}}.
\end{align}
If $(n-1)(d-1)+2\leqslant r\leqslant (n-1)d$, then $(n-2)(d-1)+1\leqslant r-d\leqslant (n-2)d$ and so the right hand side of \eqref{key} is equal to $-\binom{(n-1)d-(r-d)}{d+1}=-\binom{nd-r}{d+1}$, by the  induction hypothesis.
So, it remains to consider the case $ r=(n-1)(d-1)+1 $. We find  from \eqref{key} that
\begin{align}
B&=\sum_{i_2=0}^{d} \cdots \sum_{i_{n-2}=0}^{d}\sum_{i_{n-1}=r-d+1-i_2-\cdots-i_{n-2}}^{d}
(r-d-i_2-\cdots-i_{n-1})\binom{d}{i_2}\binom{i_2}{i_3}\cdots \binom{i_{n-2}}{i_{n-1}}\nonumber\\
&=\sum_{i_2=0}^{d} \cdots \sum_{i_{n-2}=0}^{d}\sum_{i_{n-1}=r-d+1-i_2-\cdots-i_{n-2}}^{d}
(r-d+1-i_2-\cdots-i_{n-1})\binom{d}{i_2}\binom{i_2}{i_3}\cdots \binom{i_{n-2}}{i_{n-1}}\nonumber\\
&-\sum_{i_2=0}^{d} \cdots \sum_{i_{n-2}=0}^{d}\sum_{i_{n-1}=r-d+1-i_2-\cdots-i_{n-2}}^{d}
\binom{d}{i_2}\binom{i_2}{i_3}\cdots \binom{i_{n-2}}{i_{n-1}}.\label{xyz}
\end{align}
As $r-d+1=(n-2)(d-1)+1$, it  follows form  the  induction hypothesis and \eqref{xyz} that
\begin{align}
B&=-\binom{(n-1)d-\big((n-2)(d-1)+1\big)}{d+1}\nonumber\\
&-\sum_{i_2=0}^{d} \cdots \sum_{i_{n-2}=0}^{d}\sum_{i_{n-1}=r-d+1-i_2-\cdots-i_{n-2}}^{d}
\binom{d}{i_2}\binom{i_2}{i_3}\cdots \binom{i_{n-2}}{i_{n-1}}\nonumber\\
&=-\binom{d+n-3}{d+1}-\sum_{i_2=0}^{d} \cdots \sum_{i_{n-2}=0}^{d}\sum_{i_{n-1}=(n-2)(d-1)+1-i_2-\cdots-i_{n-2}}^{d}
\binom{d}{i_2}\binom{i_2}{i_3}\cdots \binom{i_{n-2}}{i_{n-1}}.\label{xxyyzz}
\end{align}
We claim that
\begin{align}\label{n-3}
\sum_{i_2=0}^{d} \cdots \sum_{i_{n-2}=0}^{d}\sum_{i_{n-1}=(n-2)(d-1)+1-i_2-\cdots-i_{n-2}}^{d}
\binom{d}{i_2}\binom{i_2}{i_3}\cdots \binom{i_{n-2}}{i_{n-1}}=\binom{d+n-3}{d}.
\end{align}
The claim trivially holds for $n=3$. So, assume that $n\geqslant4$.
If $i_2\leqslant d-1$, then the nonzero terms in the left hand side of   \eqref{n-3} will be obtained whenever all indices $i_2, \ldots, i_{n-1}$ are at most $d-1$. However, $i_{n-1}\geqslant (n-2)(d-1)+1-i_2-\cdots-i_{n-2} \geqslant  (n-2)(d-1)+1-(n-3)(d-1)= d$, a contradiction. Hence, it is enough to show that $
C= \binom{d+n-3}{d} $, where
\begin{align*}
C=\sum_{i_3=0}^{d} \cdots \sum_{i_{n-2}=0}^{d}\sum_{i_{n-1}=(n-3)(d-1)-i_3-\cdots-i_{n-2}}^{d}
\binom{d}{i_3}\binom{i_3}{i_4}\cdots \binom{i_{n-2}}{i_{n-1}}.
\end{align*}
By substituting  $d-i_{\ell}$ with $i_{\ell}$ for $ \ell =3, \ldots, n-1$,  we derive   that
\begin{align*}
C&=\sum_{i_3=0}^{d} \cdots \sum_{i_{n-2}=0}^{d}\sum_{i_{n-1}=0}^{n-3-i_3-\cdots-i_{n-2}}
\binom{d}{d-i_3}\binom{d-i_3}{d-i_4}\cdots \binom{d-i_{n-2}}{d-i_{n-1}}\\
&=\sum_{i_3=0}^{n-3} \cdots \sum_{i_{n-2}=0}^{n-3}\sum_{i_{n-1}=0}^{n-3-i_3-\cdots-i_{n-2}}
\binom{d}{i_{n-1}}\binom{i_{n-1}}{i_{n-2}}\cdots \binom{i_4}{i_3}\\
&=\binom{d+n-3}{d},
\end{align*}
where the last equality is obtained from Proposition \ref{unknown}.
This establishes the claim.
Now, it follows from \eqref{xxyyzz} and \eqref{n-3} that
$$B=-\binom{d+n-3}{d+1}-\binom{d+n-3}{d}=-\binom{d+n-2}{d+1}.$$
As $r=(n-1)(d-1)+1$, we are done.
\end{proof}

Now,  we are ready to prove our main result. Recall Theorem \ref{sigmaT2}.

\begin{reptheorem}{sigmaT2}
Let  $n\geqslant 2$  and $d, r\geqslant 0$ be three integers with $ 0\leqslant r\leqslant (n-1)d$. Then,
\begin{align}\label{sigmasigma}
	m_e\left(K_n^d, r\right) = \sum_{i_1=0}^{r-1} \sum_{i_2=0}^{r-i_1-1} \cdots \sum_{i_{n-1}=0}^{r-i_1-\cdots-i_{n-2}-1}
(r-i_1-\cdots-i_{n-1})\binom{d}{i_1}\binom{i_1}{i_2}\cdots \binom{i_{n-2}}{i_{n-1}}.
\end{align}		
\end{reptheorem}

\begin{proof}
Fix $n\geqslant 1$ and define $a_n(s, t)$ for every integers $s, t$ with $ s\geqslant 1 $ and $ 0\leqslant t\leqslant s-1 $  as follows:
\begin{align}\label{return-an}
a_n(s, t)=\left\{\begin{array}{ll}
s &  \quad  \mbox{ if } t=0.\\
\vspace{-1mm}\\
\displaystyle{\sum_{i=1}^{\min\{s-t, n-1\}}}a_{n}(s-i, t-1) & \quad   \mbox{ otherwise}.\\
\end{array}\right.
\end{align}
We prove by induction on $t$ that
\begin{align}\label{mix}
a_n(s,t)=\sum_{i=0}^{s-t-1}a_{n-1}(s-t, i)\binom{t}{i}
\end{align}
for any $n\geqslant 2$.
In view of   \eqref{return-an}, one concludes that   \eqref{mix}   holds for $t=0$. Let  $t\geqslant 1$ and assume that \eqref{mix} holds for $t-1$. Using  \eqref{return-an},
we may write
\begin{align*}
a_n(s,t)&={\sum_{i=1}^{\min\{s-t,  n-1\}}}a_{n}(s-i, t-1)\\
&=\sum_{i=1}^{\min\{s-t,  n-1\}}\sum_{j=0}^{s-t-i}a_{n-1}(s-t-i+1, j)\binom{t-1}{j}\\
&=\sum_{j=0}^{s-t-1}a_{n-1}(s-t, j)\binom{t-1}{j}+\sum_{i=2}^{\min\{s-t,  n-1\}}\sum_{j=0}^{s-t-i}a_{n-1}(s-t-i+1, j)\binom{t-1}{j}\\
&=\sum_{j=0}^{s-t-1}a_{n-1}(s-t, j)\binom{t-1}{j}+\sum_{j=0}^{s-t-2}\sum_{i=2}^{\min\{s-t-j,  n-1\}}a_{n-1}(s-t-i+1, j)\binom{t-1}{j}\\
&=\sum_{j=0}^{s-t-1}a_{n-1}(s-t, j)\binom{t-1}{j}+\sum_{j=0}^{s-t-2}\sum_{i=1}^{\min\{s-t-j-1,  n-2\}}a_{n-1}(s-t-i, j)\binom{t-1}{j}\\
&=\sum_{j=0}^{s-t-1}a_{n-1}(s-t, j)\binom{t-1}{j}+\sum_{j=0}^{s-t-2}a_{n-1}(s-t, j+1)\binom{t-1}{j}\\
&=\sum_{j=0}^{s-t-1}a_{n-1}(s-t, j)\binom{t-1}{j}+\sum_{j=0}^{s-t-1}a_{n-1}(s-t, j)\binom{t-1}{j-1}\\
&=\sum_{j=0}^{s-t-1}a_{n-1}(s-t, j)\binom{t}{j},
\end{align*}
as required.

By repeatedly using \eqref{mix}, we  get  that
\begin{align}
\sum_{i_1=0}^{r-1}a_n(r, i_1)\binom{d}{i_1}
&=\sum_{i_1=0}^{r-1}\sum_{i_2=0}^{r-i_1-1}a_{n-1}(r-i_1, i_2)\binom{d}{i_1}\binom{i_1}{i_2}\nonumber\\
& \, \, \,   \vdots\nonumber\\
&=\sum_{i_1=0}^{r-1} \sum_{i_2=0}^{r-i_1-1} \cdots \sum_{i_{n}=0}^{r-i_1-\cdots-i_{n-1}-1}
a_1(r-i_1-\cdots-i_{n-1}, i_n)\binom{d}{i_1}\binom{i_1}{i_2}\cdots \binom{i_{n-1}}{i_{n}}\nonumber\\
&=\sum_{i_1=0}^{r-1} \sum_{i_2=0}^{r-i_1-1} \cdots \sum_{i_{n-1}=0}^{r-i_1-\cdots-i_{n-2}-1}
(r-i_1-\cdots-i_{n-1})\binom{d}{i_1}\binom{i_1}{i_2}\cdots \binom{i_{n-2}}{i_{n-1}},\hspace{-5mm}\label{sigma:mex}
\end{align}
where \eqref{sigma:mex}  comes from
\begin{align*}
a_1(s, t)=\left\{\begin{array}{ll}
s &  \quad  \mbox{ if } t=0,\\
\vspace{-1mm}\\
0 &  \quad  \mbox{ otherwise}\\
\end{array}\right.
\end{align*}
which is in turn   obtained  from   \eqref{return-an}.

In order to prove \eqref{sigmasigma} and in view of \eqref{sigma:mex}, it is enough to      establish    that
\begin{align}\label{a_n}
m_e\left(K_n^d, r\right)= \sum_{j=0}^{r-1}a_n(r, j)\binom{d}{j}
\end{align}
when $0\leqslant r\leqslant (n-1)d$.
We prove \eqref{a_n} by induction on $d$.
If $d=0$, then $r=0$ and \eqref{a_n}  clearly holds. Let  $d\geqslant 1$ and assume that  \eqref{a_n} holds for $d-1$.
First, assume that $r\leqslant (n-1)(d-1)$. We have $g=1$ and so it follows from  \eqref{return-me} that
\begin{align*}
m_e\left(K_n^d, r\right)&=\sum_{i=0}^{n-1}m_e\left(K_n^{d-1}, r-i\right)\\
 &=\sum_{i=0}^{n-1}\sum_{j=0}^{r-i-1}a_{n}(r-i, j)\binom{d-1}{j}\\
   &=\sum_{j=0}^{r-1}a_{n}(r, j)\binom{d-1}{j}+\sum_{i=1}^{n-1}\sum_{j=0}^{r-i-1}a_{n}(r-i, j)\binom{d-1}{j}\\
  &=\sum_{j=0}^{r-1}a_{n}(r, j)\binom{d-1}{j}+\sum_{j=0}^{r-2}\sum_{i=1}^{\min\{r-j-1,  n-1\}}a_{n}(r-i, j)\binom{d-1}{j}\\
&=\sum_{j=0}^{r-1}a_{n}(r, j)\binom{d-1}{j}+\sum_{j=0}^{r-2}a_{n}(r, j+1)\binom{d-1}{j}\\
&=\sum_{j=0}^{r-1}a_{n}(r, j)\binom{d-1}{j}+\sum_{j=0}^{r-1}a_{n}(r, j)\binom{d-1}{j-1}\\
&=\sum_{j=0}^{r-1}a_{n}(r, j)\binom{d}{j},
\end{align*}
as required. Next, assume that $(n-1)(d-1)+1\leqslant r\leqslant (n-1)d$. In order to prove \eqref{a_n} and in view of \eqref{sigma:mex}, it suffices to show  that the right hand sides of \eqref{extra} and \eqref{sigmasigma} are equal.
Equivalently, by letting
\[
S= \sum_{i_1=0}^{r-1} \sum_{i_2=0}^{r-i_1-1} \cdots \sum_{i_{n-1}=0}^{r-i_1-\cdots-i_{n-2}-1}
(r-i_1-\cdots-i_{n-1})\binom{d}{i_1}\binom{i_1}{i_2}\cdots \binom{i_{n-2}}{i_{n-1}},
\]
we should show that  $$ S= {{nd-r}\choose{d+1}}+\left(r-\frac{(n-1)d}{2}\right)n^d. $$
We claim  that
\begin{align*}
S=&\sum_{i_1=0}^{d} \cdots \sum_{i_{n-2}=0}^{d} \sum_{i_{n-1}=0}^{r-i_1-\cdots-i_{n-2}-1}
(r-i_1-\cdots-i_{n-1})\binom{d}{i_1}\binom{i_1}{i_2}\cdots \binom{i_{n-2}}{i_{n-1}}\label{tt}.
\end{align*}
To see this,
assume that $1\leqslant k\leqslant n-2$ and $s=r-i_1-\cdots-i_{k-1}-1$. The upper bound of $k$th summation notation  in the right hand side of  \eqref{sigmasigma} is $s$. We show that $s$ can be replaced by $d$.
Note that $\binom{d}{i_1}\binom{i_1}{i_2}\cdots \binom{i_{n-2}}{i_{n-1}}=0$ if $i_k>d$. Therefore, there is nothing to prove  if $s\geqslant d$. So, assume that $s\leqslant d-1$.
Since   the upper bound of $(k+1)$th summation notation  in the right hand side of  \eqref{sigmasigma}  is $s-i_k$,
the upper bound of $k$th summation notation  can be go up to $d$ instead of $s$. This establishes the claim.

Now, it follows from  Lemma \ref{lem:frac} that
\begin{align*}
S&=\sum_{i_1=0}^{d} \cdots \sum_{i_{n-1}=0}^{d}
(r-i_1-\cdots-i_{n-1})\binom{d}{i_1}\binom{i_1}{i_2}\cdots \binom{i_{n-2}}{i_{n-1}}\\
&-\sum_{i_1=0}^{d} \cdots \sum_{i_{n-2}=0}^{d} \sum_{i_{n-1}=r-i_1-\cdots-i_{n-2}}^{d}
(r-i_1-\cdots-i_{n-1})\binom{d}{i_1}\binom{i_1}{i_2}\cdots \binom{i_{n-2}}{i_{n-1}}\\
&=\left(r-\frac{(n-1)d}{2}\right)n^d+\binom{nd-r}{d+1}.
\end{align*}
This
establishes \eqref{a_n}  and completes the proof.
\end{proof}

\begin{remark}
	The explicit  value of $m_e(K_2^d, r)$ was presented  in  \cite{ham, mor} by a rather complicated formula. A simple expression for $m_e(K_2^d, r)$ is obtained     from   \eqref{sigmasigma}. Indeed,
	$$m_e\left(K_2^d, r\right)=\sum_{i=0}^{r}(r-i)\binom{d}{i}$$
	for any integer  $r$ with $0\leqslant r \leqslant d$.
\end{remark}

Recall that the {\sl weight}  of a tuple is defined to be   the number of its
nonzero components. The following result was proved for $n=2$ in \cite{mor}. Recall Theorem \ref{assxxx}.

\begin{reptheorem}{assxxx}
Let $n\geqslant2$ and $r$ be two fixed positive  integers  and let  $d$ be an   integer  tending  to infinity. Then,
\[
  m\left(K_n^d, r\right)=\big(1+o(1)\big)\frac{d^{r-1}}{r!}.
\]
\end{reptheorem}

\begin{proof}
It follows from \eqref{sigmasigma} that $m_e(K_n^d, r)\geqslant(1+o(1))\binom{d}{r-1}$ and so    we obtain  from \eqref{m-me} that
\begin{align}\label{lb}
 m\left(K_n^d, r\right)\geqslant \frac{1+o(1)}{r}\binom{d}{r-1}.
\end{align}

Consider a family $ \mathscr{F} $ of $ r $-subsets of $ \llbracket d \rrbracket $ such that every $ (r-1) $-subset of $ \llbracket d \rrbracket $ is
contained in at least one element of $\mathscr{F} $.
Let $U\subseteq \llbracket 2  \rrbracket^d$ be the set of all characteristic vectors corresponding to    elements of $\mathscr{F}$ and let
$ W\subseteq V( K_n^d) $ be the set of  all  vertices    of weight $r-2$. We claim that $ U\cup W $ percolates
in the $ r $-neighbor bootstrap percolation process  on $ K_n^d $.

First, note that the vertices of weights $r-3,  \ldots, 1, 0$ can be respectively activated.
This is possible since every vertex of weight $r-i$ is adjacent to $(d-r+i)(n-1)\geqslant r$  vertices of weight $r-i + 1$ for $ i =3, \ldots, r$.
Next, note that the vertices of weight $r-1$ can be activated.
To see this, we show that the vertices of weight $r-1$ with $0, 1, \ldots, r-1$ components in $  \llbracket n \rrbracket\setminus  \llbracket 2  \rrbracket $ can be respectively activated. All vertices of weight $ r-1 $ whose all  components are in $ \llbracket 2  \rrbracket$ can be activated, since such vertices have $ r - 1 $ neighbors in $ W $ and at least one neighbor in $ U $. Now, for $i=1, \ldots, r-1$, every vertex of weight $ r-1 $ with $i$ components in $ \llbracket n \rrbracket\setminus  \llbracket 2  \rrbracket $ can be activated, since such vertices have $ r - 1 $ neighbors in $ W $ and at least one neighbor of weight $ r-1 $ with $i-1$ components in $ \llbracket n \rrbracket\setminus  \llbracket 2  \rrbracket $.
Finally, note that  the vertices of weights $r,  \ldots, d$ can be respectively activated. This is possible  since  every vertex of weight $r+i$ is adjacent to  $r+i$ vertices of weight $r+i-1$ for $ i =0, 1,  \ldots, d-r$. This establishes  the claim.

By a result of R\"{o}dl \cite{Rodl}, there exists such a family  $ \mathscr{F}  $ with
$ |\mathscr{F} | = \frac{1 + o(1)}{r}\binom{d}{r-1} $. Therefore,
\begin{align}\label{ub}
m\left(K_n^d, r\right)\leqslant |U|+|W|= \frac{1+o(1)}{r}\binom{d}{r-1}+(n-1)^{r-2}\binom{d}{r-2}.
\end{align}
The result follows from \eqref{lb} and \eqref{ub}.
\end{proof}

It is worth mentioning  that it remains as an open challenging problem to find       the exact formula  for  $m(K_n^d, r)$  in general cases.
We refer to see \cite{bid} for   some results on    small $r$.

\end{document}